\newenvironment{proof}{\noindent{\em \textbf{Proof.}}}{\quad \hfill$\Box$\vspace{2ex}}
\newtheorem{theorem}{Theorem}[section]
\newtheorem{definition}[theorem]{Definition}
\newtheorem{remark}[theorem]{Remark}
\newtheorem{corollary}[theorem]{Corollary}
\newtheorem{lemma}[theorem]{Lemma}
\numberwithin{equation}{section}
\def\i{{\bf i}}
\def\j{{\bf j}}
\def\k{{\bf k}}
\def\u{{\bm{\mu}}}
\def\a{{\bm{\alpha}}}
\def\b{{\bm{\beta}}}
\def\Sc{{\mathrm{Sc}}}
\def\Vec{{\mathrm{Vec}}}
\def \H {\mathbb{H}}
\def \X {\mathbf{X}}
\def \La {L^1(\mathbb{R}^2,\H)}
\def \Lb {L^2(\mathbb{R}^2,\H)}
\def\om{\omega}
\newcommand{\norm}[1]{\left\lVert#1\right\rVert}
\title{Plancherel theorem and quaternion Fourier transform for square integrable functions}
\author{Dong Cheng\thanks{chengdong720@163.com} }
\author{Kit Ian Kou\thanks{kikou@umac.mo}}
\affil{\normalsize{Department of Mathematics, Faculty of Science and Technology, University of Macau, Macao, China}}
\date{}
\begin{document}
	\maketitle
	\begin{abstract}
The quaternion Fourier transform (QFT), a generalization of the classical 2D Fourier transform, plays an increasingly active role in particular signal and color image processing. There tends to be an inordinate degree of interest placed on the properties of QFT. The classical convolution theorem and multiplication formula are only suitable for 2D Fourier transform of complex-valued signal, and do not hold for QFT of quaternion-valued signal. The purpose of this paper is to overcome these problems and  establish the Plancherel and inversion theorems of QFT in the square integrable signals space $L^2$. First, we investigate the behaviors of QFT in the integrable signals space $L^1$. Next, we deduce the energy preservation property which extends functions from $L^1$ to $L^2$ space. Moreover, some other important properties such as modified  multiplication   formula are  also analyzed for QFT.
\end{abstract}

 \begin{keywords}
Quaternion Fourier transforms;   multiplication formula; inversion theorem; Plancherel theorem; linear canonical transform.
\end{keywords}

\begin{msc}
	42A38, 42B10, 43A32, 43A50.
\end{msc}

\section{Introduction}
The quaternion Fourier transforms (QFTs) play a vital role in the representation of (hypercomplex) signals \cite{ell1992hypercomplex}.
They transform real (or quaternion-valued) 2D signals into quaternion-valued
frequency domain signals. The four components of the QFTs separate four
cases of symmetry of real signals instead of only two as in the complex Fourier Transform (FT) \cite{kou2014asymptotic}. In \cite{bulow1999hypercomplex, ell2007hypercomplex,assefa2010local} authors used the QFTs to extend color image analysis. The
study \cite{bas2003color,chen2014full} implemented the QFTs to design a color image digital watermarking
scheme. Researchers in \cite{bayro2007quaternion} applied the QFTs to image pre-processing and
neural computing techniques for speech recognition. Recently, certain asymptotic
properties of the QFTs were analyzed and a straightforward generalization
of the classical Bochner-Minlos theorem to the framework of quaternion
analysis was derived \cite{georgiev2013bochner, kou2014asymptotic}. The intermediate representations that combine the information of the signal and its QFT or quaternion linear canonical transform (QLCT) were discussed in \cite{bahri2010windowed,  fan2017quaternion} and the corresponding applications were also presented.

An excellent introduction to the history and developments of QFT was given by F. Brackx \emph{et al.} in \cite{brackx2013history}. Quaternions   were first applied to Fourier transform by Ernst \cite{ernst1987principles}
and Delsuc \cite{delsuc1988spectral} in the late 1980s. The early quaternion transforms were proposed for the application of nuclear magnetic resonance (NMR) imaging. Ell \cite{ell1992hypercomplex} in 1992 analyzed  two-sided QFT  and then   applied it to the color image processing. In \cite{ell2007hypercomplex}, Sangwine and Ell gave a detailed interpretation of the Fourier
coefficients obtained from a QFT. Following the pioneering works of   Ernst,  Ell and Sangwine, Hitzer  studied the  QFTs (including right-sided QFT and two-sided QFT) applied to quaternion-valued functions \cite{hitzer2007quaternion} and  presented  a series of further generalizations for  QFTs \cite{hitzer2013orthogonal,hitzer2016quaternion,Hitzer2016,hitzer2017general}. Subsequently, numerous fundamental results such as uncertainty principles and time-frequency distributions  for the classical Fourier transform were carried to the QFTs \cite{bahri2008uncertainty,YK2016,KYZ2016,kou2016uncertainty,fan2017quaternion}.

In classical Fourier theory, if $f$ is integrable ($L^1 (\mathbb{R}, \mathbb{C})$), then the Fourier transform $\widehat{f}(\xi)$ is well-defined by
\begin{equation} \label{1DFT}
\widehat{f}(\xi) =  \int_{\mathbb{R}}f(t) e^{-\i\xi t} dt.
\end{equation}
Moreover, if $\widehat{f}$ is also integrable, then $f(t)=\int_{\mathbb{R}}\widehat{f}(\xi) e^{\i\xi t} d\xi$ for almost every $t\in \mathbb{R}$. The integral (\ref{1DFT}) defining the Fourier transform is not suitable for square integrable functions ($L^2 (\mathbb{R}, \mathbb{C})$). But
there is a natural and elegant theory of Fourier transforms for square integrable functions. %$L^2 (\mathbb{R}, \mathbb{C})$.
In fact, the  Fourier transform $\mathfrak{F}$ of $L^2 (\mathbb{R}, \mathbb{C})$ is not only isometric but also onto. This desirable property is very important in signal processing, especially in time-frequency analysis \cite{cohen1995time,grochenig2013foundations}.

One may naturally ask  what are the analogous results for QFTs of quaternion-valued signals? The previous contributions on inversion theorem and energy-preserved property of QFTs were developed in \cite{ell1992hypercomplex,ell1993quaternion, sangwine2012complex, bulow1999hypercomplex, hitzer2007quaternion}. On one hand, however, the existing results are not well established systematically. For instance, the definitions of QFTs  are  not well defined for square integrable functions.  The definitions of QFTs in $L^2$ ought to start from its dense subset $L^1\cap L^2$.  The previous studies didn't mention how to define the QFTs for square integrable
functions.  It should be noted that QFTs are different from the classical Fourier transform even in the one dimensional case.  In Theorem 4.2 of \cite{ell1992hypercomplex}, Parseval's identity of QFT  was established by using standard Parseval's identity of FT twice. Such a proof is inadequate.  On the other hand, the prerequisites of setting
up the established theorems were not studied systematically as well.  For instance, some authors proved Parseval's identity by using
\begin{equation}\label{Parseval id}
\begin{split}
\norm{f}_2^2&=  \int_{\mathbb{R}^2}f(x_1,x_2)\overline{f(x_1,x_2)} dx_1dx_2 \\
& = \int_{\mathbb{R}^2}\left(\int_{\mathbb{R}^2}(\mathcal{F}_rf)(\omega_1,\omega_2)e^{\j 2\pi \omega_2 x_2}e^{\i 2\pi \omega_1 x_1}d\omega_1d\omega_2\right )\overline{f(x_1,x_2)}dx_1dx_2\\
&=\int_{\mathbb{R}^2}(\mathcal{F}_rf)(\omega_1,\omega_2)\left(\overline{\int_{\mathbb{R}^2}f(x_1,x_2)e^{-\i 2\pi \omega_1 x_1} e^{-\j 2\pi \omega_2 x_2}dx_1dx_2}\right )d\omega_1d\omega_2\\
&=\int_{\mathbb{R}^2}(\mathcal{F}_rf)(\omega_1,\omega_2) \overline{(\mathcal{F}_rf)(\omega_1,\omega_2)}  d\omega_1d\omega_2=\norm{\mathcal{F}_rf }_2^2,
\end{split}
\end{equation}
where $f\in L^2(\mathbb{R}^2,\mathbb{H})$ and $\mathcal{F}_rf$ is the right-sided QFT (RQFT) of $f$ given by
\begin{equation*}\label{firstRQFTdef}
(\mathcal{F}_rf)(\om_1,\om_2):=\frac{1}{2\pi} \int_{\mathbb{R}^2}f(x_1,x_2)\displaystyle e^{-\i\om_1x_1} e^{-\j\om_2x_2}dx_1dx_2.
\end{equation*}

Even if the RQFT of $f\in L^2(\mathbb{R}^2,\mathbb{H})$  had been defined, the integral (\ref{Parseval id}) may not hold.

Therefore it is of great interest to progress the function theory of QFT for square integrable functions. To achieve this goal, we want to adopt the method of approximation to the identity by  ”good kernels”. This method   is   commonly used  in Fourier analysis \cite{stein1971introduction,rudin1987real}.
The contributions of this paper are summarized as follows.

\begin{itemize}
	\item[1.] In Fourier analysis, convolution theorem gives the connection between spatial and frequency domains of functions. It plays a vital role in verifying Plancherel theorem.  Unfortunately, the classical convolution theorem no longer holds for the QFTs.  In light of this, Theorem \ref{poisson-convolution} (Section \ref{S3-1}) is established to give the relations between the function $f$ (spatial domain) and its right-sided QFT (RQFT) (frequency domain).
	
	\item[2.] Another important property in Fourier analysis is the so-called {\it Multiplication Formula}. It is also not valid for the QFTs. Inspire of this, we derive the {\it Modified Multiplication Formula} for the RQFT in Theorem \ref{multiplication} (Section \ref{S3-1}). This formula is successfully applied to prove that the RQFT is {\it unitary} on the square integrable signals space $\Lb$ (Theorem \ref{Fourierunitary} in Section \ref{S3-2}).
	
	\item[3.] The inversions of RQFT have not been investigated for absolutely and square integrable quaterion-valued functions thoroughly in literature. We establish the inversion formulas (\ref{inversionl1}) and (\ref{inversionl2}) (Theorem \ref{inversionrightsided} in Section \ref{S3-1} and Theorem \ref{L2inverseSQFT} in Section \ref{S3-2}) for both function classes in the current paper.
	
	\item[4.] To deal with various types of QFTs, we exploit the relations between them (Theorem \ref{beta} in Section \ref{S4-1}).  Therefore, their inversion theorem on $\La$ along with Plancherel theorem on $\Lb$ are investigated, respectively.
	
	%\item[5.] To study the quaternion linear canonical transforms (QLCTs), as a generalization of QFTs, which has more degrees of freedom than the QFTs, but with similar computation cost as the QFTs. Combining the existing properties of QFTs, we develop the inversion formulas of the two-sided (sandwich) QLCTs and right-sided QLCTs accordingly in Section \ref{S5}.
\end{itemize}

The rest of the paper is organized as follows. Section \ref{S2} recalls some basic knowledge of quaternion algebra. Section \ref{S3} investigates the inversion theorem and Plancherel theorem of RQFT of $\Lb$. In Section \ref{S4}, we establish the relationships between RQFT and the  two-sided QFT (SQFT) and study the elementary properties of  SQFT.
%In Section \ref{S5}, the properties of  QLCTs are  studied by the existing properties of QFTs.
Finally, discussions and conclusions are drawn in Section \ref{S6}.

\section{Preliminaries}\label{S2}

%%%%%%%%%%%%%%%%%%%%%%%%%%%%%%%%%%%%%%%%%%%%%%%%%%%%%%%%%%%%%%%%%%%%%%%%%%%%%%%%%%%%%%%%%%%%%%%
\subsection{Quaternion Algebra}\label{S2-1}

Throughout the paper, let
\begin{equation*}
\H :=\{q=q_0+\i q_1+\j q_2+\k q_3|~q_0,q_1,q_2,q_3\in\mathbb{R}\}
\end{equation*}
be the {\em{Hamiltonian skew field of quaternions}}, where the elements $\i$, $\j$ and $\k$ obey  Hamilton's multiplication rules:
\begin{equation*}
\i\j=-\j\i=\k,~~\j\k=-\k\j=\i,~~\k\i=-\i\k=\j,~~\i^2=\j^2=\i\j\k=-1.
\end{equation*}
For every quaternion $q=q_0+\underline{q}$, $\underline{q}=\i q_1+\j q_2+\k q_3$, the scalar and vector parts of $q$, are defined by $\Sc(q)=q_0$ and $\Vec(q)=\underline{q}$, respectively. If $q=\Vec(q)$, then $q_0=0$ and $q=\underline{q}$ is called a pure imaginary quaternion.
The quaternion conjugate is defined by $\overline{q}=q_0-\underline{q}=q_0-\i q_1-\j q_2-\k q_3$, and the norm $|q|$ of $q$ is defined by
$|q|^2={q\overline{q}}={\overline{q}q}=\sum_{m=0}^{m=3}{q_m^2}$.
Then we have
\begin{equation*}
\overline{\overline{q}}=q,~~~\overline{p+q}=\overline{p}+\overline{q},~~~\overline{pq}=\overline{q}~\overline{p},~~~|pq|=|p||q|,~~~~\forall p,q\in\H.
\end{equation*}
Using the conjugate and norm of $q$, one can define the inverse of $q\in\H\backslash\{0\}$ by $q^{-1}=\overline{q}/|q|^2$. The multiplication of two quaternions is noncommutative, but
\begin{equation}\label{cyclic multiplication symmetry}
\Sc(pq)=\Sc(qp)~~~~\forall p,q\in\H.
\end{equation}

The quaternion exponential function $e^{q}$ is defined by means of an infinite series as $$e^{q}:=\sum_{n=0}^\infty \frac{q^n}{n!}.$$
Analogous to the complex case one may derive a closed-form representation:
$$e^{q}=e^{q_0}(\cos|\underline{q}|+\frac{\underline{q}}{|\underline{q}|}\sin|\underline{q}|).$$

The unit sphere $S$ in $\H$ is defined by $S:=\{ q=q_0+\underline{q} \, : \, q =\Vec (q), |q|=1, q \in \H \}$. For a fixed $\u \in S$, let $\mathbb{C}_{\u}:=\{a+b\u:a,b\in\mathbb{R}, |\u|=1, \u=\Vec(\u) \}$, then it is easy to see that
$\mathbb{C}_{\u}$ is isomorphic to the complex plane.

%%%%%%%%%%%%%%%%%%%%%%%%%%%%%%%%%%%%%%%%%%%%%%%%%%%%%%%%%%%%%%%%%%%%%%%%%%%%%%%%%%%%%%%%%%%%%
\subsection{Quaternion module $L^p(\mathbb{R}^2,\H)$}\label{S2-2}

The left   quaternion module $L^p(\mathbb{R}^2,\H)(p=1,2)$  consists of all $\H$-valued  functions satisfying
\begin{equation*}
L^p(\mathbb{R}^2,\H):=\left\{f|f:\mathbb{R}^2\rightarrow \H,\|f\|_p^p:=\int_{\mathbb{R}^2}|f(x_1,x_2)|^{p}dx_1dx_2<\infty\right\}.
\end{equation*}
For $p=\infty$, like the complex case, the left quaternion module $L^\infty(\mathbb{R}^2,\H)$ consists of all essentially bounded measurable $\H$-valued functions on $\mathbb{R}^2$.
The left quaternionic inner product of $f,g\in L^2(\mathbb{R}^2,\H)$ is defined by
\begin{equation*}
<f,g>_{L^2(\mathbb{R}^2,\H)} :=\int_{\mathbb{R}^2}f(x_1,x_2)\overline{g(x_1,x_2)}dx_1dx_2.
\end{equation*}
This inner product leads to a scalar norm $\|f\|_2^2=<f,f>_{L^2(\mathbb{R}^2,\H)}$ which coincides with the usual $L^2$-norm for $f$, considered as a vector-valued function. In  fact, $L^2(\mathbb{R}^2,\H)$ is a left quaternionic Hilbert space with inner product $<\cdot,\cdot>_{L^2(\mathbb{R}^2,\H)}$ (see \cite{brackx1982clifford}).
% Therefore, if $\{e_n\}$ is  an orthonormal basis of $L^2(\X,\H)$, then
%\begin{equation*}
% <f,g>_{L^2(\X,\H)}=\sum_n<f,e_n>_{L^2(\X,\H)}<e_n,g>_{L^2(\X,\H)}
%\end{equation*}
%holds for all $f,g\in L^2(\X,\H)$. This is a desirable property of  quaternionic Hilbert space which is presented in \cite{GhiloniMorettiPerotti2013}.
%For different $\X_1, \X_2$, $<\cdot,\cdot>_{L^2(\X_1,\H)}$ and  $<\cdot,\cdot>_{L^2(\X_2 ,\H)}$ have nothing different but for integral region.
For simplicity of notation, we use $<\cdot,\cdot>$  in place of $<\cdot,\cdot>_{L^2(\mathbb{R}^2,\H)}$ in the following.   It is not difficult to verify that
\begin{equation}\label{prop-of-inner-product}
<h f,qg>= h <f,g>\overline{q}, \qquad \mbox{for any } f, g \in L^2(\mathbb{R}^2,\H)  \mbox{ and }  h, q \in \H.
\end{equation}

%-----------------------------------------------------------------------------
\section{The right-sided quaternion Fourier transform} \label{S3}

The quaternion
Fourier transform (QFT) was first defined by Ell to  analyze linear time-invariant systems of partial differential equations \cite{ell1993quaternion}. Later, some constructive works related to the QFT and applications to signal and color image processing were studied \cite{sangwine1996fourier,bihan2003quaternion,bulow1999hypercomplex,hitzer2007quaternion,ell2007hypercomplex,bahri2008uncertainty,ell2013quaternion,kou2016envelope,YK2016,KYZ2016, YKZ2016,fan2017quaternion}). The non-commutativity of the quaternion multiplication leads to various types of quaternion Fourier transformations  (see \cite{ell2013quaternion}). In this
section, we  consider the right-sided QFT.

%%%%%%%%%%%%%%%%%%%%%%%%%%%%%%%%%%%%%%%%%%%%%%%%%%%%%%%%%%%%%%%%%%%%%%%%%%%%%%%%%%%%%%%%%%%
\subsection{The right-sided quaternion Fourier transform pairs in $\La$ }\label{S3-1}

Let us first review the definition of right-sided QFT (see for instance  \cite{ell2013quaternion,ell1993quaternion,georgiev2013bochner, KYZ2016}).
\begin{definition}[RQFT]
	For every $f\in\La$, the right-sided quaternion Fourier transform (RQFT) of $f$ is defined by
	\begin{equation}\label{RQFTdef}
	(\mathcal{F}_rf)(\om_1,\om_2):=\frac{1}{2\pi} \int_{\mathbb{R}^2}f(x_1,x_2)\displaystyle e^{-\i\om_1x_1} e^{-\j\om_2x_2}dx_1dx_2.
	\end{equation}
\end{definition}

It is well-known that the  convolution theorem  does not hold for the RQFT.  There is a pressing need for an alternative  tool to bridge the spatial and frequency domain of  functions.
Therefore we give the following result to connect their relationship.
%The following results are related to our study of the inversion theorem  and Plancherel theorem of RQFT. We use the integral representations  to express the convolutions.

%-----------------------------------------------
\begin{theorem}\label{poisson-convolution}
	Let $p_\epsilon(x_1,x_2):=\frac{1}{\pi^2}\frac{\epsilon^2}{(\epsilon^2+x_1^2)(\epsilon^2+x_2^2)}$ be the Poisson kernel and  $P(\om_1,\om_2):= e^{-|\om_1|-|\om_2|}$.
	If $f\in\La$,  put $\widetilde{f}(x_1,x_2):=\overline{f(-x_1,-x_2)}$ and define $g(x_1,x_2):=(\widetilde{f}\ast f)(x_1,x_2)=\int_{\mathbb{R}^2}\overline{f(y_1,y_2)}f(x_1+y_1,x_2+y_2)dy_1dy_2$, then
	
	$$(f\ast p_\epsilon)(x_1,x_2)=\frac{1}{2\pi}\int_{\mathbb{R}^2} P(\epsilon \om_1,\epsilon \om_2)(\mathcal{F}_rf)(\om_1,\om_2)  e^{\j \om_2x_2} e^{\i \om_1x_1} d\om_1d\om_2.$$
	If $f$ is also contained in $\Lb$, then
	$$ \Sc((g\ast p_\epsilon )(0,0))= \int_{\mathbb{R}^2} P(\epsilon \om_1,\epsilon \om_2) \left |(\mathcal{F}_rf)(\om_1,\om_2) \right|^2  d\om_1d\om_2.$$
	
\end{theorem}

%----------------------------------------------
\begin{proof}
	We notice that
	\begin{equation*}
	\frac{\epsilon}{\epsilon^2+x^2}=\frac{1}{2}\int_{\mathbb{R}}e^{-|\epsilon \omega|}e^{\i \omega x}dx=\frac{1}{2}\int_{\mathbb{R}}e^{-|\epsilon \omega|}e^{\j \omega x}dx.
	\end{equation*}
	Hence we can write
	\begin{align*}
	% \nonumber to remove numbering (before each equation)
	p_\epsilon(x_1-y_1,x_2-y_2)
	&=  \frac{1}{2\pi^2}\cdot\frac{\epsilon}{\epsilon^2+(x_2-y_2)^2}\int_\mathbb{R} e^{-|\epsilon \om_1|} e^{-\i \om_1y_1} e^{\i \om_1x_1}d\om_1  \\
	&=  \frac{1}{2\pi^2} \int_\mathbb{R} e^{-|\epsilon \om_1|} e^{-\i \om_1y_1}\cdot\frac{\epsilon}{\epsilon^2+(x_2-y_2)^2} e^{\i \om_1x_1}d\om_1   \\
	&=  \frac{1}{4\pi^2}\int_\mathbb{R} e^{-|\epsilon \om_1|} e^{-\i \om_1y_1}\int_\mathbb{R} e^{-|\epsilon \om_2|} e^{\j \om_2(x_2-y_2)}d\om_2 e^{\i \om_1x_1}d\om_1 \\
	&= \frac{1}{4\pi^2}\int_{\mathbb{R}^2}P(\epsilon \om_1,\epsilon \om_2) e^{-\i \om_1y_1} e^{-\j\om_2y_2} e^{\j \om_2x_2} e^{\i \om_1x_1}d\om_1d\om_2.
	\end{align*}
	Using this  integral representation of Poisson kernel and taking the convolution of $f$ and $p_{\epsilon}$, we obtain
	\begin{align*}
	% \nonumber to remove numbering (before each equation)
	&~~~~ (f\ast p_\epsilon)(x_1,x_2) \\
	&= \int_{\mathbb{R}^2} dy_1dy_2f(y_1,y_2)\frac{1}{4\pi^2}\int_{\mathbb{R}^2}P(\epsilon \om_1,\epsilon \om_2) e^{-\i \om_1y_1} e^{-\j\om_2y_2} e^{\j \om_2x_2} e^{\i \om_1x_1}d\om_1d\om_2\\
	&=\frac{1}{4\pi^2}\int_{\mathbb{R}^2} P(\epsilon \om_1,\epsilon \om_2)d\om_1d\om_2\int_{\mathbb{R}^2}f(y_1,y_2)  e^{-\i \om_1y_1} e^{-\j \om_2y_2} e^{\j \om_2x_2} e^{\i \om_1x_1} dy_1dy_2\\
	&=\frac{1}{2\pi}\int_{\mathbb{R}^2}P(\epsilon \om_1,\epsilon \om_2)(\mathcal{F}_rf)(\om_1,\om_2)  e^{\j \om_2x_2} e^{\i \om_1x_1} d\om_1d\om_2.
	\end{align*}
	If $f\in \La \cap \Lb$, H{\"o}lder's inequality gives $g\in \La$. Note that
	\begin{align*}
	g(x_1,x_2)=(\widetilde{f}*f)(x_1,x_2)&=\int_{\mathbb{R}^2}\overline{f(-y_1,-y_2)}f(x_1-y_1,x_2-y_2)dy_1dy_2\\
	&= \int_{\mathbb{R}^2}\overline{f(y_1,y_2)}f(x_1+y_1,x_2+y_2)dy_1dy_2.
	\end{align*}
	Applying the  integral representation of Poisson kernel  we have
	\begin{align*}
	% \nonumber to remove numbering (before each equation)
	&~~~~\Sc((g*p_\epsilon)(0,0))\\
	&=\Sc\left(\int_{\mathbb{R}^2} g(y_1,y_2) p_\epsilon(-y_1,-y_2)dy_1dy_2\right)\\
	&= \Sc\left( \int_{\mathbb{R}^2}\left[\int_{\mathbb{R}^2}\overline{f(s_1,s_2)}f(s_1+y_1,s_2+y_2)ds_1ds_2\right] p_\epsilon(-y_1,-y_2)dy_1dy_2 \right) \\
	&= \Sc\left(  \int_{\mathbb{R}^2}ds_1ds_2\int_{\mathbb{R}^2}\overline{f(s_1,s_2)}f(s_1+y_1,s_2+y_2) p_\epsilon(-y_1,-y_2)dy_1dy_2 \right)  \\
	&=\Sc\left(\int_{\mathbb{R}^2}ds_1ds_2\int_{\mathbb{R}^2}\overline{f(s_1,s_2)}f(z_1,z_2) p_\epsilon(s_1-z_1,s_2-z_2)dz_1dz_2 \right)   \\
	&=  \Sc \left( \frac{1}{4\pi^2}\int_{\mathbb{R}^6}ds_1ds_2\overline{f(s_1,s_2)}f(z_1,z_2)dz_1dz_2P(\epsilon \om_1,\epsilon \om_2)     e^{-\i \om_1z_1} e^{-\j \om_2z_2} e^{\j \om_2s_2} e^{\i \om_1s_1} d\om_1d\om_2 \vphantom{\frac{1}{4\pi^2}}\right) \\
	&=\Sc\left(\frac{1}{4\pi^2}\int_{\mathbb{R}^6}d\om_1d\om_2P(\epsilon \om_1,\epsilon \om_2)ds_1ds_2 e^{\j \om_2s_2} e^{\i \om_1s_1}\overline{f(s_1,s_2)}f(z_1,z_2)
	e^{-\i \om_1z_1} e^{-\j \om_2z_2}dz_1dz_2\vphantom{\frac{1}{4\pi^2}} \right) \\
	&= \int_{\mathbb{R}^2} P(\epsilon \om_1,\epsilon \om_2) |(\mathcal{F}_rf)(\om_1,\om_2)|^2  d\om_1d\om_2
	\end{align*}
	which completes the proof.
\end{proof}

%----------------------------
\begin{remark}
	The convolution operation  can be interpreted as a weighted average. After taking convolution operation with $p_{\epsilon}$, the new function $f\ast p_{\epsilon}$ is better than the original function $f$ because $f\ast p_{\epsilon}$ can be expressed by an inverse transform.
\end{remark}

%-----------------------------------
To authors' knowledge, the inversions of RQFT have not been investigated for absolutely integrable quaternion-valued functions thoroughly. To proceed, we need the following lemmas.

\begin{lemma}\label{cauchy-sequence}
	Let $\Lambda$ be an index set. If $\{f_\lambda \}_{\lambda\in \Lambda}$ is a Cauchy sequence in $L^p(\mathbb{R}^2,\H)~ (p=1,2,\infty)$, with limit $f$, then $\{f_\lambda \}_{\lambda\in \Lambda}$  has a subsequence which converges pointwise for almost every $(x_1,x_2)\in \mathbb{R}^2$ to $f$.
\end{lemma}

\begin{lemma}\label{poisson-approximation}
	Let $p_\epsilon(x_1,x_2)=\frac{1}{\pi^2}\frac{\epsilon^2}{(\epsilon^2+x_1^2)(\epsilon^2+x_2^2)}$ be the Poisson kernel, then
	\begin{enumerate}[(\romannumeral1)]
		\item  $\lim_{\epsilon\rightarrow 0}\|f\ast p_{\epsilon}-f\|_p=0$ for every $f\in L^p(\mathbb{R}^2,\H)(p=1,2)$;
		\item  $\lim_{\epsilon\rightarrow 0}(f*p_\epsilon)(x_1,x_2)=f(x_1,x_2)$, if $f(x_1,x_2)\in L^\infty(\mathbb{R}^2,\H)$   is continuous at a point $(x_1,x_2)$.
	\end{enumerate}
\end{lemma}

The proofs of Lemmas \ref{cauchy-sequence} and \ref{poisson-approximation} in the 1D case are treated in  \cite{rudin1987real,stein1971introduction}. In fact, $f=f_0+\i f_1+\j f_2 +\k f_3\in L^p(\mathbb{R}^2,\H)(p=1,2,\infty)$ if and only if $f_m(m=0,1,2,3) \in L^p(\mathbb{R}^2,\mathbb{R})$. Then we can consider every $f\in L^p(\mathbb{R}^2,\H)(p=1,2,\infty)$ as a linear combination of real (complex) $L^p$ functions. More specifically, by the definition of norm of quaternions, we have $|f|^2= \sum_{m=0}^3|f_m |^2$ and therefore
\begin{equation*}
|f| \leq 2 \max\{|f_m|:m=0,1,2,3\}  \leq 2 \sum_{m=0}^3|f_m|.
\end{equation*}
Thus  $\norm{f}_{\infty}\leq 2 \sum_{m=0}^3\norm{f_m}_{\infty}$, $\norm{f}_1\leq 2 \sum_{m=0}^3\norm{f_m}_1$ and $\norm{f}_2^2\leq \sum_{m=0}^3\norm{f_m}_2^2$. That means that the $L^p$ ($p=1,2,\infty$) norm of quaternion-valued function is dominated  by the $L^p$ ($p=1,2,\infty$) norm of four real components. Hence in the multidimensional case it can be obtained from the similar results of 1D case.

%------------------------------------------------------------
\begin{remark}
	The function $p_\epsilon$ is  actually a family of good kernels.  Lemma \ref{poisson-approximation} indicates that $f*p_\epsilon$ converges  to $f$ as $\epsilon\to 0$ if $f$ satisfies   suitable conditions.
\end{remark}

%------------------------
By Lemmas \ref{cauchy-sequence}, \ref{poisson-approximation} and Theorem \ref{poisson-convolution}, we derive the inversion theorem of RQFT.

%--------------------------
\begin{theorem}[Inversion of RQFT in $L^1$]\label{inversionrightsided}
	If $f$ and its RQFT $\mathcal{F}_rf\in L^1(\mathbb{R}^2,\H)$, moreover
	\begin{equation}\label{inversionl1}
	g(x_1,x_2)= \frac{1}{2\pi} \int_{\mathbb{R}^2}(\mathcal{F}_rf)(\om_1,\om_2) e^{\j \omega_2x_2} e^{\i \omega_1x_1}d\omega_1d\omega_2,
	\end{equation}
	then  $f(x_1,x_2)=g(x_1,x_2)$ for almost every $(x_1,x_2)\in \mathbb{R}^2$.
\end{theorem}

\begin{proof}
	From Theorem  \ref{poisson-convolution},
	\begin{equation}\label{poisson-convolution1}
	(f\ast p_\epsilon)(x_1,x_2)=\frac{1}{2\pi}\int_{\mathbb{R}^2} P(\epsilon \om_1,\epsilon \om_2)(\mathcal{F}_rf)(\om_1,\om_2)  e^{\j \om_2x_2} e^{\i \om_1x_1} d\om_1d\om_2.
	\end{equation}
	%\begin{equation*}
	% \lim_{\lambda\rightarrow 0}{(f*h_\lambda)(x_1,x_2)}=f(x_1,x_2)
	%\end{equation*}
	The integrands on the right side of (\ref{poisson-convolution1}) are bounded by $|(\mathcal{F}_rf)(\om_1,\om_2)|$, and since  $P(\epsilon \om_1,\epsilon \om_2)\rightarrow1$ as $\epsilon\rightarrow0$, then
	$$\frac{1}{2\pi}P(\epsilon \om_1,\epsilon \om_2)(\mathcal{F}_rf)(\om_1,\om_2)  e^{\j \om_2x_2} e^{\i \om_1x_1}\rightarrow \frac{1}{2\pi} (\mathcal{F}_rf)(\om_1,\om_2) e^{\j \om_2x_2} e^{\i \om_1x_1}$$
	as $\epsilon\rightarrow0$. Hence, the right side of (\ref{poisson-convolution1}) converges to $g(x_1,x_2)$, for every $(x_1,x_2)\in \mathbb{R}^2$,  by the dominated convergence theorem.
	
	From Lemmas \ref{cauchy-sequence} and \ref{poisson-approximation}, we see that that there is a sequence $\{\epsilon_n\}$ such that $\epsilon_n\rightarrow 0$ and
	$\lim_{n\rightarrow \infty}{(f*p_{\epsilon_n})(x_1,x_2)}={f(x_1,x_2)}$, for almost every $(x_1,x_2)\in \mathbb{R}^2$.
	Hence $f(x_1,x_2)=g(x_1,x_2)$ for almost every $(x_1,x_2)\in \mathbb{R}^2$.
\end{proof}

If $f \in L^1(\mathbb{R}^2,\H)$, Theorem \ref{inversionrightsided} indicates that $\mathcal{F}_r f =0$  for almost every $(\om_1,\om_2)\in \mathbb{R}^2$ implies $f=0$ for almost every $(x_1,x_2)\in \mathbb{R}^2$. Consequently, we have the following corollary.
%Furthermore, we have the uniqueness of QFT.
\begin{corollary}[Uniqueness of RQFT]
	If $f,g\in L^1(\mathbb{R}^2,\H)$ and $(\mathcal{F}_rf)(\om_1,\om_2)=(\mathcal{F}_rg)(\om_1,\om_2)$ for almost every $(\om_1,\om_2)\in \mathbb{R}^2$, then $f(x_1,x_2)=g(x_1,x_2)$ for almost every $(x_1,x_2)\in \mathbb{R}^2$.
\end{corollary}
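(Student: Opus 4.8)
The plan is to deduce the uniqueness corollary directly from the inversion theorem together with the linearity of the RQFT. First I would observe that the RQFT is a left $\H$-linear (indeed $\mathbb{R}$-linear) operator on $\La$: for $f,g\in\La$ and $\lambda\in\H$ one has $\mathcal{F}_r(f+g)=\mathcal{F}_rf+\mathcal{F}_rg$ and $\mathcal{F}_r(\lambda f)=\lambda\,\mathcal{F}_rf$, both of which follow immediately from the integral definition in Definition \ref{1} and the distributivity of quaternion multiplication over addition. In particular, setting $h:=f-g$, we have $h\in\La$ and $\mathcal{F}_rh=\mathcal{F}_rf-\mathcal{F}_rg$.

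Next I would use the hypothesis that $(\mathcal{F}_rf)(\om_1,\om_2)=(\mathcal{F}_rg)(\om_1,\om_2)$ for almost every $(\om_1,\om_2)\in\mathbb{R}^2$, which gives $(\mathcal{F}_rh)(\om_1,\om_2)=0$ for almost every $(\om_1,\om_2)$. Hence $\mathcal{F}_rh$ is (equal a.e. to) the zero function, so in particular $\mathcal{F}_rh\in\La$ with $\|\mathcal{F}_rh\|_{L^1}=0$. Thus the pair $(h,\mathcal{F}_rh)$ satisfies the hypotheses of Theorem \ref{inversionrightsided}: both $h$ and $\mathcal{F}_rh$ lie in $\La$. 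Applying that theorem, $h(x_1,x_2)$ equals, for almost every $(x_1,x_2)$, the inverse-transform integral
\begin{equation*}
\frac{1}{2\pi}\int_{\mathbb{R}^2}(\mathcal{F}_rh)(\om_1,\om_2)\,e^{\j\om_2 x_2}e^{\i\om_1 x_1}\,d\om_1 d\om_2,
\end{equation*}
and since the integrand vanishes for almost every $(\om_1,\om_2)$, this integral is identically $0$. Therefore $h=0$ a.e., i.e. $f(x_1,x_2)=g(x_1,x_2)$ for almost every $(x_1,x_2)\in\mathbb{R}^2$.

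There is essentially no obstacle here; the only point requiring a word of care is the verification that altering $\mathcal{F}_rh$ on a null set does not affect the value of the inverse integral — this is immediate since Lebesgue integration ignores null sets — and the observation that "$\mathcal{F}_rh=0$ a.e." is exactly the hypothesis needed to invoke Theorem \ref{inversionrightsided} (namely $\mathcal{F}_rh\in\La$). One could alternatively phrase the argument via the remark immediately preceding the corollary: $\mathcal{F}_rh=0$ a.e. forces $h=0$ a.e. I would present the short self-contained version above, making the reduction $h=f-g$ explicit so the role of linearity is transparent.
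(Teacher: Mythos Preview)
Your proof is correct and follows exactly the approach the paper indicates: the sentence immediately preceding the corollary notes that Theorem \ref{inversionrightsided} implies $\mathcal{F}_r f=0$ a.e.\ forces $f=0$ a.e., and you have simply made this explicit by setting $h=f-g$ and invoking linearity. There is nothing to add; the paper does not even spell out a separate proof.
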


Now we see that, under suitable conditions,  the original signal $f$ can be reconstructed from $\mathcal{F}_rf$ by the inverse right-sided quaternion Fourier   transform (IRQFT).

%-----------------------------------
\begin{definition}[IRQFT]\label{inversionRQFT}
	For every $f \in\La$, the inverse right-sided quaternion Fourier transform of $f$ is defined by
	\begin{equation*}
	(\mathcal{F}_r^{-1} f)(x_1,x_2):=\frac{1}{2\pi} \int_{\mathbb{R}^2} f(\om_1,\om_2) e^{\j\om_2x_2} e^{\i\om_1x_1}d\om_1d\om_2.
	\end{equation*}
\end{definition}
\begin{remark}
	Both $\mathcal{F}_r$ and $\mathcal{F}_r^{-1}$ are bounded linear transformation from $\La$ into   $L^\infty(\mathbb{R}^2,\H)$. We will see later, both $\mathcal{F}_r|_{L^1\cap L^2}$ and
	$\mathcal{F}_r^{-1}|_{L^1\cap L^2}$ can be extended to $\Lb$. As an operator on  $\Lb$,  $\mathcal{F}_r^{-1}$ is the inversion of $\mathcal{F}_r$.
\end{remark}

Next we present some properties of IRQFT.
\begin{theorem}\label{derivative}
	Suppose  that $F_r\in\La$ and $f=\mathcal{F}_r^{-1} F_r$.
	\begin{enumerate}[(\romannumeral1)]
		\item If $G_{1r}(\om_1,\om_2)=F_r(\om_1,\om_2)\i \om_1\in\La$, then the partial derivative of $f$ with respect to $x_1$ exists and $\frac{\partial f}{\partial x_1} (x_1,-x_2)=(\mathcal{F}_r^{-1}G_{1r})(x_1,x_2)$.
		\item If $G_{2r}(\om_1,\om_2)=F_r(\om_1,\om_2)\j \om_2\in\La$, then the partial derivative of $f$ with respect to $x_2$ exists and $\frac{\partial f}{\partial x_2} (x_1,x_2)=(\mathcal{F}_r^{-1}G_{2r})(x_1,x_2)$.
		\item If $G_{3r}(\om_1,\om_2)=F_r(\om_1,\om_2)\k \om_1\om_2\in\La$, then $$-\frac{\partial^2 f }{\partial x_1\partial x_2} (x_1,-x_2)=(\mathcal{F}_r^{-1}G_{3r})(x_1,x_2).$$
	\end{enumerate}
\end{theorem}
\begin{proof}
	To prove (\romannumeral1), we note that for $s\neq x_1$,
	\begin{equation}\label{partial derivative}
	\frac{f(s,-x_2)-f(x_1,-x_2)}{s-x_1}=\frac{1}{2\pi}\int_{\mathbb{R}^2}F_r(\om_1,\om_2)e^{-\j\om_2x_2}\frac{e^{\i \om_1 s}-e^{\i\om_1 x_1}}{s-x_1}d\om_1\om_2.
	\end{equation}
	Since $\left|\frac{e^{\i \om_1 s}-e^{\i\om_1 x_1}}{s-x_1}\right |\leq |\om_1|$ for all $s\neq x_1$ and $\lim_{s\to x_1}\frac{e^{\i \om_1 s}-e^{\i\om_1 x_1}}{s-x_1}=\i \om_1 e^{\i\om_1 x_1}$, then the integrands on the right side of (\ref{partial derivative})   are bounded by   $G_{1r}(\om_1,\om_2)$. Let $s$ tends to $x_1$, by the  dominated convergence theorem, we conclude that
	\begin{equation*}
	\begin{split}
	\frac{\partial f}{\partial x_1} (x_1,-x_2) & =\frac{1}{2\pi}\int_{\mathbb{R}^2}F_r(\om_1,\om_2)e^{-\j\om_2x_2}\i \om_1 e^{\i\om_1 x_1}d\om_1\om_2 \\
	& =\frac{1}{2\pi}\int_{\mathbb{R}^2}F_r(\om_1,\om_2)\i \om_1 e^{\j\om_2x_2}e^{\i\om_1 x_1}d\om_1\om_2=(\mathcal{F}_r^{-1}G_{1r})(x_1,x_2).
	\end{split}
	\end{equation*}
	By similar arguments, we can prove (\romannumeral2) and (\romannumeral3). We omit the proofs here.
\end{proof}

An important result in Fourier analysis is the so-called {\it multiplication formula}. The generalization of multiplication formula  to the RQFT is developed in the following. To  proceed, we introduce the auxiliary transform of $f(x_1,x_2)=f_0(x_1,x_2)+\i f_1(x_1,x_2)+\j f_2(x_1,x_2)+\k f_3(x_1,x_2)$. It is defined by
\begin{equation}\label{at}
\a f(x_1,x_2):=f_0(x_1,x_2)+\i f_1(x_1,-x_2)+\j f_2(-x_1,x_2)+\k f_3(-x_1,-x_2).
\end{equation}
Then we obtain the following result.

%----------------------------------------
\begin{theorem}[Modified Multiplication Formula] \label{multiplication}
	Suppose  that $f,g\in \La$, $ h:= \a g$ , $H_r:=\mathcal{F}_rh$ and $F_r:=\mathcal{F}_rf$,  then
	\begin{equation}\label{multiplication-formula}
	\int_{\mathbb{R}^2}F_r(x_1,x_2)g(x_1,x_2)dx_1dx_2= \int_{\mathbb{R}^2}f(x_1,x_2)H_r(x_1,x_2)dx_1dx_2.
	\end{equation}
	Moreover, if $g$ belongs to $\Lb$, then we have $\norm{g}_2=\norm{h}_2$.
\end{theorem}

%--------------------------------------------------
\begin{proof}
	Write $g=g_0 +\i g_1 +\j g_2 +\k g_3 $, then
	\begin{align*}
	% \nonumber to remove numbering (before each equation)
	&~~~~ \int_{\mathbb{R}^2}  e^{-\i\om_1x_1} e^{-\j\om_2x_2} g(\om_1,\om_2)d\om_1d\om_2 \\
	&=\int_{\mathbb{R}^2}g_0(\om_1,\om_2)  e^{-\i\om_1x_1} e^{-\j\om_2x_2} d\om_1d\om_2 +  \int_{\mathbb{R}^2}\i g_1(\om_1,\om_2)  e^{-\i\om_1x_1} e^{\j\om_2x_2} d\om_1d\om_2\\
	&~~ +  \int_{\mathbb{R}^2}\j g_2(\om_1,\om_2) e^{\i\om_1x_1} e^{-\j\om_2x_2} d\om_1d\om_2+ \int_{\mathbb{R}^2}\k g_3(\om_1,\om_2) e^{\i\om_1x_1} e^{\j\om_2x_2} d\om_1d\om_2\\
	&= \int_{\mathbb{R}^2}g_0(\om_1,\om_2) e^{-\i\om_1x_1} e^{-\j\om_2x_2} d\om_1d\om_2 +  \int_{\mathbb{R}^2}\i g_1(\om_1,-\om_2) e^{-\i\om_1x_1} e^{-\j\om_2x_2} d\om_1d\om_2\\
	&~~ +  \int_{\mathbb{R}^2}\j g_2(-\om_1,\om_2) e^{-\i\om_1x_1} e^{-\j\om_2x_2} d\om_1d\om_2+ \int_{\mathbb{R}^2}\k g_3(-\om_1,-\om_2) e^{-\i\om_1x_1} e^{-\j\om_2x_2} d\om_1d\om_2\\
	&= \int_{\mathbb{R}^2} h(\om_1,\om_2) e^{-\i\om_1x_1} e^{-\j\om_2x_2} d\om_1d\om_2=H_r(x_1,x_2).
	\end{align*}
	Applying Fubini's theorem, we have
	\begin{align*}
	% \nonumber to remove numbering (before each equation)
	&~~~~ \int_{\mathbb{R}^2}F_r(\om_1,\om_2)g(\om_1,\om_2)d\om_1d\om_2 \\
	&=  \int_{\mathbb{R}^2}  \left ( \int_{\mathbb{R}^2}f(x_1,x_2) e^{-\i\om_1x_1} e^{-\j\om_2x_2}dx_1dx_2\right )g(\om_1,\om_2)d\om_1d\om_2 \\
	&=  \int_{\mathbb{R}^2} f(x_1,x_2) \left ( \int_{\mathbb{R}^2} e^{-\i\om_1x_1} e^{-\j\om_2x_2}g(\om_1,\om_2)d\om_1d\om_2\right )dx_1dx_2 \\
	&= \int_{\mathbb{R}^2}f(x_1,x_2)H_r(x_1,x_2)dx_1dx_2.
	\end{align*}
	If $g\in \Lb$, it is easy to verify  that $\norm{g}_2=\norm{h}_2$ by definition of auxiliary transform (\ref{at}).
\end{proof}
\begin{remark}
	The  multiplication formula of complex Fourier transform has the form $$\int_{\mathbb{R}}\widehat{f}(x)g(x)dx=\int_{\mathbb{R}} f(\omega)\widehat{g}(\omega)d\omega,$$
	where $f,g\in\La$. This standard formula is not valid for RQFT of integrable functions. Using the  auxiliary
	transform $\a$, we obtain an   analogous formula ({\ref{multiplication-formula}}) for quaternion-valued integrable functions.  We name it the {\it modified multiplication formula of RQFT}.
\end{remark}

%%%%%%%%%%%%%%%%%%%%%%%%%%%%%%%%%%%%%%%%%%%%%%%%%%%%%%%%%%%%%%%%%%%%%%%%%%%%%%%%%%%%%%%%%%%
\subsection{The Plancherel Theorem Associated with RQFT}\label{S3-2}

In the complex case, the Plancherel theorem states that if $f\in L^1\cap L^2$, and it turns out that $\widehat{f}\in L^2$ and $\|\widehat{f}\|_2=\|f\|_2$,  where $\widehat{f}$ is the classical Fourier transform of $f$. Moreover, this  isometry of $L^1\cap L^2$  into
$L^2$ extends to an isometry of $L^2$  onto $L^2$ , and this extension defines the Fourier
transform  of every $f\in L^2$.  The convolution theorem plays a vital role in proving the Plancherel theorem (see  \cite{rudin1987real,stein1971introduction}). However, the classical convolution theorem no longer holds for the QFT.  The Plancherel theorem of QFT was discussed in recent research papers \cite{bulow1999hypercomplex},\cite{hitzer2007quaternion},\cite{bahri2008uncertainty}. We give a   restatement  of Plancherel theorem  here since the prerequisites for setting up of  the theorem may not be put forward so clearly  in recent research papers. It is probably worth pointing out that Theorem \ref{poisson-convolution} plays a key role in our proof.

\begin{theorem}\label{Parseval}
	If $f\in L^1(\mathbb{R}^2,\H)\cap L^2(\mathbb{R}^2,\H)$, then  $\mathcal{F}_rf\in  L^2(\mathbb{R}^2,\H)$ and
	Parseval's identity $\|\mathcal{F}_rf \|^2_2=\norm{f}^2_2 $ holds.
\end{theorem}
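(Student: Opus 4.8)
The plan is to combine Proposition~\ref{poisson-convolution} with the fact that the Poisson kernel $p_\epsilon$ is an approximate identity, exactly as in the proof of Theorem~\ref{inversionrightsided}, but now exploiting the \emph{scalar} identity in Proposition~\ref{poisson-convolution} rather than the vector one. Given $f\in\La\cap\Lb$, set $\widetilde f(x_1,x_2):=\overline{f(-x_1,-x_2)}$ and $g:=\widetilde f\ast f$ as in Proposition~\ref{poisson-convolution}. Since $f\in\Lb$ we also have $\widetilde f\in\Lb$, and the Cauchy--Schwarz inequality gives $|g(x_1,x_2)|\le\|f\|_2^2$ for every $(x_1,x_2)$, so $g\in L^\infty(\mathbb{R}^2,\H)$; continuity of translation in $\Lb$ shows moreover that $g$ is (uniformly) continuous, hence in particular continuous at the origin, where $g(0,0)=\int_{\mathbb{R}^2}\overline{f(y_1,y_2)}f(y_1,y_2)\,dy_1dy_2=\|f\|_2^2$ is a real number.

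First I would apply Proposition~\ref{poisson-approximation}(2) to $g$ at the point $(0,0)$: since $g\in L^\infty(\mathbb{R}^2,\H)$ and $g$ is continuous at $(0,0)$, it yields $\lim_{\epsilon\to0}(g\ast p_\epsilon)(0,0)=g(0,0)=\|f\|_2^2$, and therefore $\lim_{\epsilon\to0}\Sc\big((g\ast p_\epsilon)(0,0)\big)=\|f\|_2^2$ because $g(0,0)$ is real. On the other hand, Proposition~\ref{poisson-convolution} evaluates the same quantity as $\Sc\big((g\ast p_\epsilon)(0,0)\big)=\int_{\mathbb{R}^2}P(\epsilon\om_1,\epsilon\om_2)\,|(\mathcal{F}_rf)(\om_1,\om_2)|^2\,d\om_1d\om_2$ with $P(\om_1,\om_2)=e^{-|\om_1|-|\om_2|}$. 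Picking any sequence $\epsilon_n\downarrow0$, the factor $P(\epsilon_n\om_1,\epsilon_n\om_2)=e^{-\epsilon_n(|\om_1|+|\om_2|)}$ increases monotonically to $1$, so the monotone convergence theorem gives $\int_{\mathbb{R}^2}|(\mathcal{F}_rf)(\om_1,\om_2)|^2\,d\om_1d\om_2=\lim_{n\to\infty}\Sc\big((g\ast p_{\epsilon_n})(0,0)\big)=\|f\|_2^2$. This simultaneously shows that $\int_{\mathbb{R}^2}|(\mathcal{F}_rf)|^2<\infty$, i.e. $\mathcal{F}_rf\in\Lb$ (note $\mathcal{F}_rf$ is measurable since, by the Remark following Definition~\ref{1}, $\mathcal{F}_r$ maps $\La$ into $L^\infty(\mathbb{R}^2,\H)$), and that $\|\mathcal{F}_rf\|_2^2=\|f\|_2^2$, which is the assertion.

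The only points requiring genuine care are (i) verifying that $g=\widetilde f\ast f$ is bounded and continuous at the origin so that Proposition~\ref{poisson-approximation}(2) legitimately applies — this is precisely where the hypothesis $f\in\Lb$ (not merely $f\in\La$) enters, via Cauchy--Schwarz and $L^2$-continuity of translation — and (ii) the interchange of limit and integral, which here is painless because the Poisson factor converges monotonically upward to $1$, so no dominating function beyond the monotonicity itself is needed. I expect (i) to be the main (though mild) obstacle; everything else is a direct reading of Propositions~\ref{poisson-approximation} and \ref{poisson-convolution}.
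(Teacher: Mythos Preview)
Your proposal is correct and follows essentially the same path as the paper's proof: define $g=\widetilde f\ast f$, use Cauchy--Schwarz and $L^2$-continuity of translation to show $g$ is bounded and continuous so that Proposition~\ref{poisson-approximation}(2) applies at the origin, invoke the scalar identity of Proposition~\ref{poisson-convolution}, and finish with monotone convergence. The only cosmetic difference is that the paper also records $g\in\La$ (from $f,\widetilde f\in\La$), but this is already implicit in the hypothesis of Proposition~\ref{poisson-convolution} and not needed separately in your argument.
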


\begin{proof}
	We fix $f \in L^1(\mathbb{R}^2,\H)\cap L^2(\mathbb{R}^2,\H)$, put $\widetilde{f}(x_1,x_2):=\overline{f(-x_1,-x_2)}$ and define
	\begin{equation*}
	g(x_1,x_2) =(\widetilde{f}*f)(x_1,x_2)=<\overline{f},\overline{f}_{-x_1,-x_2}> =\int_{\mathbb{R}^2}\overline{f(y_1,y_2)}f(x_1+y_1,x_2+y_2)dy_1dy_2,
	\end{equation*}
	where $\overline{f}_{-x_1,-x_2}(y_1,y_2)$ denotes a translation of $\overline{f}$.
	Since $(x_1,x_2)\mapsto \overline{f}_{-x_1,-x_2}$ is a continuous mapping of $\mathbb{R}^2$ into $L^2(\mathbb{R}^2,\H)$ and noticing that the continuity of the inner product, we see that $g(x_1,x_2)$ is a continuous function. By Cauchy-Schwarz inequality,
	\begin{equation*}
	|g(x_1,x_2)|\leq \|\overline{f}_{-x_1,-x_2}\|_2\|f\|_2=\|f\|^2_2.
	\end{equation*}
	Thus $g$ is bounded. Furthermore, $g\in L^1(\mathbb{R}^2,\H)$ since $f\in L^1(\mathbb{R}^2,\H)$ and $\widetilde{f}\in L^1(\mathbb{R}^2,\H)$.

	%\begin{equation*}
	%  (g*h_\lambda)(0,0)=\Sc\left[(g*h_\lambda)(0,0)\right]=\Sc\left[\frac{1}{4\pi^2}\int_{\mathbb{R}^2} %H(\lambda t_1,\lambda t_2)\widehat{g}(t_1,t_2) dt_1dt_2\right].
	%\end{equation*}
	Since $g$ is continuous and bounded, Lemma \ref{poisson-approximation} shows that
	\begin{equation*}
	\lim_{\epsilon\rightarrow 0}\Sc((g*p_\epsilon)(0,0))=\Sc(g(0,0))=\|f\|^2_2.
	\end{equation*}
	%$\lim_{\epsilon\rightarrow 0}(f*p_\epsilon)(x_1,x_2)=f(x_1,x_2)$
	On the other hand,
	since $g\in L^1(\mathbb{R}^2,\H)$, by Theorem \ref{poisson-convolution},
	\begin{equation*}
	\Sc\left((g*p_\epsilon)(0,0)\right) = \int_{\mathbb{R}^2} P(\epsilon \om_1,\epsilon \om_2) |(\mathcal{F}_rf)(\om_1,\om_2)|^2 d\om_1d\om_2.
	\end{equation*}
	Since $0\leq P(\epsilon \om_1,\epsilon \om_2)  |(\mathcal{F}_rf)(\om_1,\om_2)|^2$ increases to $ |(\mathcal{F}_rf)(\om_1,\om_2)|^2$ as $\epsilon\rightarrow0$, the monotone convergence theorem gives
	\begin{align*}
	% \nonumber to remove numbering (before each equation)
	\lim_{\epsilon\rightarrow 0}\Sc((g*p_\epsilon)(0,0)) &=   \lim_{\epsilon\rightarrow 0} \int_{\mathbb{R}^2} P(\epsilon \om_1,\epsilon \om_2) |(\mathcal{F}_rf)(\om_1,\om_2)|^2 d\om_1d\om_2 \\
	&= \int_{\mathbb{R}^2}\lim_{\epsilon\rightarrow 0}  P(\epsilon \om_1,\epsilon \om_2) |(\mathcal{F}_rf)(\om_1,\om_2)|^2 d\om_1d\om_2 \\
	&=   \int_{\mathbb{R}^2} |(\mathcal{F}_rf)(\om_1,\om_2)|^2 d\om_1d\om_2=\|\mathcal{F}_rf\|^2_2.
	\end{align*}
	Therefore, $\mathcal{F}_rf \in L^2(\mathbb{R}^2,\H)$ and $\|\mathcal{F}_rf\|^2_2=\norm{f}^2_2$.
\end{proof}

From Theorem \ref{Parseval}, $\mathcal{F}_r|_{L^1\cap L^2}$ is an isometry of $\La\cap \Lb$  into
$\Lb$. Since  $\La\cap \Lb$ is a dense subset of $\Lb$. Therefore, there exists a unique bounded (continuous) extension, $\Psi_r$, of   $\mathcal{F}_r|_{L^1\cap L^2}$ to all of $\Lb$.
If $f\in \Lb$,  $F_r=\Psi_r f$ is defined by the $L^2$ limit of  the sequence $\{ \mathcal{F}_rf_k\} $, where $\{f_k\}$ is any sequence in $\La\cap\Lb$ converging to $f$ in the $L^2$ norm. If we choose  $f_k(x_1,x_2)=f(x_1,x_2)\chi_{[-k,k]^2}(x_1,x_2)$, we have
$$F_r(\om_1,\om_2)=\mathop{\mathrm{l.i.m.}}\limits_{k\rightarrow\infty}\frac{1}{2\pi}\int_{[-k,k]^2}f(x_1,x_2) e^{-\i x_1\om_1} e^{-\j x_2\om_2}dx_1dx_2,$$
where $f=\mathop{\mathrm{l.i.m.}}\limits_{k\rightarrow\infty}f_k$ means $\|f-f_k\|_2\rightarrow0$ as $k\rightarrow\infty$.

We call $F_r=\Psi_r f$ is the RQFT of $f$ on $L^2(\mathbb{R}^2,\H)$.  The multiplication formula (\ref{multiplication-formula}) easily extends to $\Lb$.  The left $\H$-linear operator $\Psi_r$ on $\Lb$ is an isometry. Therefore, $\Psi_r$ is a one-to-one mapping.  Moreover, we can show that $\Psi_r$ is onto.

%--------------------------------------------------------------------------
\begin{theorem}\label{Fourierunitary} [Unitary]
	The RQFT $\Psi_r$  is a unitary operator on $\Lb$.
\end{theorem}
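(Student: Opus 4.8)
The plan is to show that $\Psi_r$ is surjective; since we already know it is a left $\H$-linear isometry (hence injective with closed range), surjectivity will complete the proof that $\Psi_r$ is unitary. The natural candidate for the inverse is the $L^2$-extension of $\mathcal{F}_r^{-1}$, so first I would run the same argument used to build $\Psi_r$: by Theorem~\ref{Parseval} (applied with the roles of the two exponential kernels and their signs adjusted, which is the content of Proposition~\ref{multiplication} giving $\norm{\mathcal{F}_r^{-1}g}_2 = \norm{g}_2$ on $\La \cap \Lb$ via the auxiliary transform $\a$), the operator $\mathcal{F}_r^{-1}|_{L^1\cap L^2}$ is an isometry on the dense subspace $\La\cap\Lb$ and therefore extends uniquely to a bounded left $\H$-linear operator $\Phi_r$ on all of $\Lb$.

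Next I would verify that $\Phi_r$ is a two-sided inverse of $\Psi_r$. On the dense subspace $\La \cap \Lb$ with $\mathcal{F}_r f$ also lying in $\La$, Theorem~\ref{inversionrightsided} gives $\mathcal{F}_r^{-1}(\mathcal{F}_r f) = f$ pointwise a.e., so $\Phi_r \Psi_r f = f$ there; by continuity of both operators and density, $\Phi_r \Psi_r = \mathrm{Id}$ on all of $\Lb$. The identity $\Psi_r \Phi_r = \mathrm{Id}$ follows symmetrically, using the companion inversion statement for $\mathcal{F}_r^{-1}$ (which is proved exactly as Theorem~\ref{inversionrightsided}, interchanging the kernels). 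Hence $\Psi_r$ is a bijective isometry, i.e.\ unitary: the inner-product preservation $\langle \Psi_r f, \Psi_r g\rangle = \langle f,g\rangle$ follows from $\|\Psi_r f\|_2 = \|f\|_2$ together with the polarization identity, which is valid in the left quaternionic Hilbert space $\Lb$ because the scalar part of the inner product is a genuine real inner product.

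The main obstacle is bookkeeping rather than depth: one must be careful that the "inverse" transform really is the adjoint/inverse and not merely a right inverse, so establishing \emph{both} compositions equal the identity (or, equivalently, invoking that a surjective isometry on a Hilbert space is unitary once one checks the range of $\Psi_r$ is all of $\Lb$, which is exactly $\Psi_r\Phi_r = \mathrm{Id}$) is the step that needs care. A secondary subtlety is that $\mathcal{F}_r^{-1}$ uses the kernels $e^{\j\om_1 x_1}e^{\i\om_2 x_2}$ in a different order and with the indices swapped relative to Definition~\ref{1} of $\mathcal{F}_r$; one has to check that the isometry and inversion results transfer correctly under this reordering, which is where Proposition~\ref{multiplication} and the auxiliary map $\a$ do the real work. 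Once these are in place, the conclusion that $\Psi_r$ is unitary on $\Lb$ is immediate.
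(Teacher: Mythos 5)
Your route is genuinely different from the paper's. The paper proves surjectivity by a duality argument: the range of $\Psi_r$ is closed because $\Psi_r$ is an isometry and $\Lb$ is complete; if the range were a proper subspace, there would be a nonzero $u$ orthogonal to it, and putting $g=\overline{u}$, $h=\a g$ into the multiplication formula of Proposition~\ref{multiplication} and then choosing $f=\overline{H_r}$ forces $H_r=0$ a.e., contradicting $\norm{H_r}_2=\norm{h}_2=\norm{g}_2=\norm{u}_2\neq 0$. That argument uses only tools already proved (Theorem~\ref{Parseval} and Proposition~\ref{multiplication}) and never needs an explicit inverse. You instead build the inverse directly as the $L^2$-extension of $\mathcal{F}_r^{-1}$ and check both compositions equal the identity; this is closer in spirit to the paper's Theorem~\ref{L2inverseSQFT}, which, note, the paper derives \emph{from} unitarity via the adjoint, not the other way around.

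Two steps in your plan are genuine gaps rather than bookkeeping. First, you invoke Theorem~\ref{inversionrightsided} ``on the dense subspace $\La\cap\Lb$ with $\mathcal{F}_r f$ also lying in $\La$,'' but the density of this set in $\Lb$ is established nowhere and must be proved (e.g., by checking componentwise that the RQFT of a Schwartz function is Schwartz, or by working with $f\ast p_\epsilon$ via Proposition~\ref{poisson-convolution}); the same issue recurs for $\{F: F,\,\mathcal{F}_r^{-1}F\in\La\}$. Second, and more seriously, $\Psi_r\Phi_r=\mathrm{Id}$ is exactly the surjectivity you are trying to prove, and it rests on the companion statement $\mathcal{F}_r(\mathcal{F}_r^{-1}F)=F$; this is \emph{not} Theorem~\ref{inversionrightsided} and does not follow from it by symmetry for free, because the kernels $ e^{\i\om_1x_1}$ and $ e^{\j\om_2x_2}$ do not commute with a quaternion-valued $F$, so the Poisson-kernel computation of Proposition~\ref{poisson-convolution} must be redone with the exponentials placed on the other side. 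The same remark applies to your claim that the isometry of $\mathcal{F}_r^{-1}$ on $\La\cap\Lb$ is ``the content of Proposition~\ref{multiplication}'': that proposition only gives $\norm{g}_2=\norm{\a g}_2$, and what you need is a mirrored version of Theorem~\ref{Parseval}. All of these mirrored lemmas can be carried out, so your approach is salvageable, but each must actually be written down; the paper's orthogonal-complement argument is designed precisely to avoid them.
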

\begin{proof}
	We first show that the range of $\Psi_r$ denoted by $R(\Psi_r)$ is a closed subspace of $\Lb$. Let $\{F_k\}$ be a sequence  in $R(\Psi_r)$  converging to $F_r$ in $L^2$ norm sense. We now prove that $F_r \in R(\Psi_r)$.  Suppose that $\Psi_r f_k=F_k$. Then the isometric property shows that  $\Psi_r$ is continuous and $ \{f_k\}$ is also a Cauchy sequence.  The completeness of $L^2(\mathbb{R}^2,\H)$ implies  that $\{f_k\}$ converges to some $f\in L^2(\mathbb{R}^2,\H)$, and the continuity of $\Psi_r$ shows that $\Psi_r f =\mathop{\mathrm{l.i.m.}}\limits_{k\rightarrow\infty}\Psi_r f_k=F_r$.
	
	If $R(\Psi_r)$ were not all of $\Lb$, as every  closed subspace of Hilbert space $\Lb$  has an orthogonal complement, we could find a function $u$ such that $$ \int_{\mathbb{R}^2}F_r(x_1,x_2)\overline{u(x_1,x_2)}dx_1dx_2=0$$
	for all $f\in \Lb$ and $\norm{u}_2\neq 0$. Let $g=\overline{u}, h=\a g$, by multiplication formula (\ref{multiplication-formula}),
	\begin{equation*}
	\int_{\mathbb{R}^2}f(x_1,x_2)H_r(x_1,x_2)dx_1dx_2=\int_{\mathbb{R}^2}F_r(x_1,x_2)g(x_1,x_2)dx_1dx_2= 0
	\end{equation*}
	for all $f\in \Lb$. Pick $f=\overline{H_r}$,  this implies that $H_r(x_1,x_2)=0$  for almost every $(x_1,x_2)\in \mathbb{R}^2$, contradicting the  fact that $\norm{H_r}_2=\norm{h}_2=\norm{g}_2=\norm{u}_2\neq 0$.
\end{proof}

The next result shows that the mapping $\Psi_r$ is a Hilbert space isomorphism   of $L^2(\mathbb{R}^2,\H)$, that is, preserving inner product.

\begin{theorem}\label{keepinnerproduct}
	Let $f,g\in L^2(\mathbb{R}^2,\H)$ and $F_r :=\Psi_r f, G_r :=\Psi_r g$. Then
	\begin{equation*}
	\int_{\mathbb{R}^2}f(x_1,x_2)\overline{g(x_1,x_2)}dx_1dx_2=\int_{\mathbb{R}^2}F_r(\om_1,\om_2)\overline{G_r(\om_1,\om_2)}d\om_1d\om_2.
	\end{equation*}
\end{theorem}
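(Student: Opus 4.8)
The plan is to obtain the full quaternionic identity from Parseval's identity (Theorem~\ref{Parseval}, in the extended form on all of $\Lb$ established after its proof, where $\Psi_r$ is shown to be an isometry of $\Lb$) by a polarization argument adapted to the noncommutative setting. The first observation I would record is that a quaternion is completely recovered from the scalar parts of its products with the basis units: for every $q=q_0+\i q_1+\j q_2+\k q_3$ one has $\Sc(q)=q_0$, $\Sc(\i q)=-q_1$, $\Sc(\j q)=-q_2$, $\Sc(\k q)=-q_3$, hence $q=\Sc(q)-\i\,\Sc(\i q)-\j\,\Sc(\j q)-\k\,\Sc(\k q)$. Consequently it suffices to prove $\Sc\big(e<f,g>\big)=\Sc\big(e<F_r,G_r>\big)$ for each $e\in\{1,\i,\j,\k\}$.

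First I would treat the scalar part, $e=1$. Applying the isometry $\|\Psi_r h\|_2^2=\|h\|_2^2$ with $h=f+g$, expanding both sides via $<u,v>+<v,u>=2\,\Sc<u,v>$, using the additivity $\Psi_r(f+g)=F_r+G_r$, and cancelling $\|f\|_2^2=\|F_r\|_2^2$ and $\|g\|_2^2=\|G_r\|_2^2$, I obtain $\Sc<f,g>=\Sc<F_r,G_r>$ for all $f,g\in\Lb$.

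Next, to capture the three imaginary components, I would fix $e\in\{\i,\j,\k\}$ and apply the identity just proved to the pair $(f,\overline e g)$ in place of $(f,g)$. This is legitimate because $\overline e g\in\Lb$ (as $|e|=1$) and, crucially, because $\Psi_r$ commutes with left multiplication by constant quaternions, so $\Psi_r(\overline e g)=\overline e\,G_r$. Combining this with property~(\ref{prop-of-inner-product}), which gives $<f,\overline e g>=<f,g>\,\overline{\overline e}=<f,g>\,e$ and likewise $<F_r,\overline e G_r>=<F_r,G_r>\,e$, and with the trace property~(\ref{cyclic multiplication symmetry}), namely $\Sc\big(<f,g>e\big)=\Sc\big(e<f,g>\big)$, I get $\Sc\big(e<f,g>\big)=\Sc\big(e<F_r,G_r>\big)$. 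Together with the $e=1$ case and the recovery formula of the first paragraph, this forces $<f,g>=<F_r,G_r>$.

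The computations here are all routine; the single point that needs care, and the reason the classical complex polarization identity cannot be transcribed verbatim, is that over $\H$ the norm determines only the scalar part of the inner product, so the three imaginary components must be extracted one at a time. For that extraction it is essential that the RQFT $\Psi_r$ intertwines with left (and not right) multiplication by constant quaternions, so that replacing $g$ by $\overline e g$ merely multiplies $G_r$ on the left by $\overline e$ and the bookkeeping above closes up.
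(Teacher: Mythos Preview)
Your proof is correct and follows essentially the same route as the paper's own argument: both recover the four real components of $\langle f,g\rangle$ by polarization, first obtaining the scalar part from $\|f+g\|_2^2=\|F_r+G_r\|_2^2$ and then extracting the $\i$-, $\j$-, $\k$-components by replacing $g$ with a left multiple $eg$ (the paper uses $e\in\{\i,\j,\k\}$, you use $\overline e$), invoking the left $\H$-linearity of $\Psi_r$ together with~(\ref{prop-of-inner-product}). The only difference is cosmetic: you phrase the extraction as ``apply the scalar-part identity to $(f,\overline e g)$'' while the paper expands $\|f+eg\|_2^2$ directly, but the computations and ingredients are identical.
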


\begin{proof}
	Let
	\begin{equation*}
	p_0+\i p_1+\j p_2+\k p_3=\int_{\mathbb{R}^2}f(x_1,x_2)\overline{g(x_1,x_2)}dx_1dx_2
	\end{equation*}
	and
	\begin{equation*}
	q_0+\i q_1+\j q_2+\k q_3=\int_{\mathbb{R}^2}F_r(\om_1,\om_2)\overline{G_r(\om_1,\om_2)}d\om_1d\om_2.
	\end{equation*}
	From the Parseval's identity, we  have
	\begin{eqnarray*}
		% \nonumber to remove numbering (before each equation)
		\|f+g\|_2^2 &=& \|f\|_2^2+\|g\|_2^2+2p_0 \\
		&=&  \|F_r+G_r\|_2^2 \\
		&=& \|F_r\|_2^2+\|G_r\|_2^2+2q_0 .
	\end{eqnarray*}
	Thus $p_0=q_0$.
	By using (\ref{prop-of-inner-product}) and applying Parseval's identity to $\|f+\i g\|_2^2=\|F_r+\i G_r\|_2^2$, $\|f+\j g\|_2^2=\|F_r+\j G_r\|_2^2$, $\|f+\k g\|_2^2=\|F_r+\k G_r\|_2^2$ respectively, we can get
	\begin{equation*}
	p_m=q_m,~~~(m=1,2,3)
	\end{equation*}
	which completes the proof.
\end{proof}
%------------------------------------------------

We have shown that  $\Psi_r$  is  unitary  on $\Lb$. Thus $\Psi_r^{-1}F$ is uniquely determined for every $F\in\Lb$. The following result gives the explicit expressions for $\Psi_r^{-1}F$.

%-----------------------------------------------------
\begin{theorem}\label{L2inverseSQFT}[Inversion of RQFT in $L^2$]
	The inverse $f=\Psi_r^{-1}F_r$ is the $L^2$ limit of the sequence $\{\mathcal{F}_r^{-1}F_{rk}\}_k$, where $\{F_{rk}\}_k$ is any sequence in $\La\cap\Lb$ converging to $F_r$ in the $L^2$ norm. If we choose $F_{rk}=F_r\chi_{[-k,k]^2}$, then we have
	\begin{eqnarray}\label{inversionl2}
	f(x_1,x_2)=\mathop{\mathrm{l.i.m.}}\limits_{k\rightarrow\infty}\frac{1}{2\pi}\int_{[-k,k]^2}F_r(\om_1,\om_2) e^{\j x_2\om_2} e^{\i x_1\om_1}d\om_1d\om_2.\end{eqnarray}
	In particular, if $F_r\in L^1(\mathbb{R}^2,\H)\cap L^2(\mathbb{R}^2,\H)$, then $$f(x_1,x_2)=\frac{1}{2\pi}\int_{\mathbb{R}^2}F_r(\om_1,\om_2) e^{\j x_2\om_2} e^{\i x_1\om_1}d\om_1d\om_2.$$
\end{theorem}
\begin{proof}
	For  $\Psi_r\in\mathcal{B}(\Lb)$, the quaternionic Riesz representation theorem (see \cite{brackx1982clifford})   guarantees that there exists a unique operator $\Psi_r^*\in\mathcal{B}(\Lb)$, which is called the adjoint of $\Psi_r$, such that for all $f,g\in \Lb$, $<\Psi_r f,g>=<f,\Psi_r^*g>$.  Since $\Psi_r$ is unitary, then $\Psi_r^{-1}=\Psi_r^*$. For any fixed $F_r\in\Lb$, let $\{F_{rk}\}_k$ be an arbitrary sequence  in $\La\cap\Lb$ converging to $F_r$ in the $L^2$ norm, then we have
	\begin{align*}
	% \nonumber to remove numbering (before each equation)
	<g,\Psi_r^*F_r> &=  <G_r,F_r> \\
	&=  \lim_{k\rightarrow \infty} <G_r,F_{rk}>\\
	&=  \lim_{k\rightarrow \infty} \int_{\mathbb{R}^2}\left(\int_{\mathbb{R}^2}g(x_1,x_2) e^{-\i x_1\om_1} e^{-\j x_2\om_2}dx_1dx_2\right)\overline{F_{rk}(\om_1,\om_2)}d\om_1d\om_2\\
	&= \lim_{k\rightarrow \infty} \int_{\mathbb{R}^2}g(x_1,x_2){\left(\overline{\int_{\mathbb{R}^2}{F_{rk}(\om_1,\om_2) e^{\j x_2\om_2} e^{\i x_1\om_1}}d\om_1d\om_2}\right)}dx_1dx_2\\
	&= \lim_{k\rightarrow \infty}<g,\mathcal{F}_r^{-1}F_{rk}>=<g,\mathop{\mathrm{l.i.m.}}\limits_{k\rightarrow\infty}\mathcal{F}_r^{-1}F_{rk}>
	\end{align*}
	for all $g\in\La\cap\Lb$. The last equality is a consequence of  the continuity of inner product. Thus $f=\Psi_r^{-1}F=\Psi_r^*F_r=\mathop{\mathrm{l.i.m.}}\limits_{k\rightarrow\infty}\mathcal{F}_r^{-1}F_{rk}$.
	In particular, if $F_r\in L^1(\mathbb{R}^2,\H)\cap L^2(\mathbb{R}^2,\H)$, then $$\displaystyle f(x_1,x_2)=\frac{1}{2\pi}\int_{\mathbb{R}^2}F_r(\om_1,\om_2) e^{\j x_2\om_2} e^{\i x_1\om_1}d\om_1d\om_2 $$
	which completes the proof.
\end{proof}
\begin{remark}
	Since $\Psi_r(\Psi_r^{-1})$ coincides with  $ \mathcal{F}_r(\mathcal{F}_r^{-1})$ in $\La\cap \Lb$. For simplicity of notations, in the following, by capital letter $F_r$, we mean the RQFT of $f\in \La \cup \Lb$ if no otherwise specified.
\end{remark}

%------------------------------------------------------------------
\section{The SQFT}\label{S4}

In this section, we  study the two-sided (sandwich) quaternion Fourier transform (SQFT). Lets first review the definition of SQFT \cite{kou2016uncertainty, kou2016envelope}.

\subsection{The SQFT  Pairs in $\La$ }\label{S4-1}
\begin{definition}[SQFT]\label{SQFT}
	For every $f\in\La$, two-sided quaternion Fourier transform of $f$ is defined by
	\begin{equation*}
	(\mathcal{F}_sf)(\om_1,\om_2):=\frac{1}{2\pi} \int_{\mathbb{R}^2} e^{-\i\om_1x_1}f(x_1,x_2) e^{-\j\om_2x_2}dx_1dx_2.
	\end{equation*}
\end{definition}

Unlike the RQFT, SQFT is not a left $\H$-linear operator. But SQFT is left $\mathbb{C}_{\i}$-linear and right $\mathbb{C}_{\j}$-linear (Section \ref{S2-1}, $\mathbb{C}_{\u}$ with $\u={\i}$ or ${\j}$, respectively). Moreover,  the SQFT is related to the RQFT through the following transform.

\begin{definition}\label{betaoperator}
	If $f(x_1,x_2)= f_0(x_1,x_2)+\i f_1(x_1,x_2)+\j f_2(x_1,x_2)+\k f_3(x_1,x_2)\in L^p(\X,\H)(p=1,2)$, then the operator  $\b$ is  defined   by
	\begin{equation*}
	\b f(x_1,x_2):=f_0(x_1,x_2)+\i f_1(x_1,x_2)+\j f_2(-x_1,x_2)+\k f_3(-x_1,x_2).
	\end{equation*}
\end{definition}

%----------------------------------------------------------------------------------
\begin{theorem}\label{beta}
	Suppose  that $f,g\in L^p(\X,\H)(p=1,2)$. Then the following assertions hold.
	\begin{enumerate}[(\romannumeral1)]
		\item  The operator $\b$ is a left $\mathbb{C}_{\i}$-linear   bijection mapping on $L^p(\X,\H) $. Therefore the inverse of $\b$ can be well-defined, denoted it by $\b^{-1}$. Moreover, $\b^{-1} =\b$.
		\item If $f\in \La$ then
		\begin{equation}\label{relation-of-Fs-and-Fr}
		\mathcal{F}_sf=\mathcal{F}_r (\b f).
		\end{equation}
		Moreover, if $f$ is  $\mathbb{C}_{\i}$-valued or $f$ is even with respect to the first variable, then $\mathcal{F}_sf=\mathcal{F}_r  f$.
		\item If $f,g\in \Lb$ then $\Sc \left(<\b f, \b g>\right)=\Sc \left(<f, g>\right)$, $\Sc \left(\i <\b f, \b g>\right)=\Sc \left( \i <f, g>\right)$. In particular $\norm{f}_2=\norm{\b f}_2$.
	\end{enumerate}
\end{theorem}

%-----------------------------------------
\begin{proof}
	The assertion (\romannumeral1) is a direct consequence of the Definition \ref{betaoperator}. To prove assertion (\romannumeral2),  write $f$ in form of $  f_0+\i f_1+\j f_2+\k f_3$ and let $h:=\b f$. Then
	\begin{align*}
	% \nonumber to remove numbering (before each equation)
	&~~~~ \mathcal{F}_sf(\om_1,\om_2) \\
	&=  \int_{\mathbb{R}^2}  e^{-\i\om_1x_1} f(x_1,x_2)  e^{-\j\om_2x_2} dx_1dx_2 \\
	&= \int_{\mathbb{R}^2}f_0(x_1,x_2)  e^{-\i\om_1x_1} e^{-\j\om_2x_2} dx_1dx_2 +  \int_{\mathbb{R}^2}\i f_1(x_1,x_2)  e^{-\i\om_1x_1} e^{-\j\om_2x_2} dx_1dx_2\\
	&~~ +  \int_{\mathbb{R}^2}\j f_2(x_1,x_2)  e^{\i\om_1x_1} e^{-\j\om_2x_2} dx_1dx_2+ \int_{\mathbb{R}^2}\k f_3(x_1,x_2)  e^{\i\om_1x_1} e^{-\j\om_2x_2} ddx_1dx_2\\
	&= \int_{\mathbb{R}^2}f_0(x_1,x_2)  e^{-\i\om_1x_1} e^{-\j\om_2x_2} dx_1dx_2 +  \int_{\mathbb{R}^2}\i f_1(x_1,x_2)  e^{-\i\om_1x_1} e^{-\j\om_2x_2} dx_1dx_2\\
	&~~ +  \int_{\mathbb{R}^2}\j f_2(-x_1,x_2)  e^{-\i\om_1x_1} e^{-\j\om_2x_2} dx_1dx_2+ \int_{\mathbb{R}^2}\k f_3(-x_1,x_2)  e^{-\i\om_1x_1} e^{-\j\om_2x_2} ddx_1dx_2\\
	&= \int_{\mathbb{R}^2} h(x_1,x_2) e^{-\i\om_1x_1} e^{-\j\om_2x_2} dx_1dx_2= \mathcal{F}_rh(\om_1,\om_2) .
	\end{align*}
	It follows that $\mathcal{F}_sf=\mathcal{F}_r (\b f)$. If $f$ is  $\mathbb{C}_{\i}$-valued or $f$ is even with respect to the first variable, then $\b f= f$.  Thus $\mathcal{F}_sf=\mathcal{F}_r  f$.
	
	To prove the assertion (\romannumeral3),  let  $f,g \in \Lb$, then
	\begin{align*}
	% \nonumber to remove numbering (before each equation)
	&~~~~  \Sc \left(<\b f, \b g>\right)\\
	&=   \int_{\mathbb{R}^2}f_0(x_1,x_2)g_0(x_1,x_2)dx_1dx_2+ \int_{\mathbb{R}^2}f_1(x_1,x_2)g_1(x_1,x_2)dx_1dx_2 \\
	&~~  + \int_{\mathbb{R}^2}f_2(-x_1,x_2)g_2(-x_1,x_2)dx_1dx_2 +\int_{\mathbb{R}^2}f_3(-x_1,x_2)g_3(-x_1,x_2)dx_1dx_2\\
	&=  \int_{\mathbb{R}^2}f_0(x_1,x_2)g_0(x_1,x_2)dx_1dx_2+ \int_{\mathbb{R}^2}f_1(x_1,x_2)g_1(x_1,x_2)dx_1dx_2 \\
	&~~  + \int_{\mathbb{R}^2}f_2(x_1,x_2)g_2(x_1,x_2)dx_1dx_2 +\int_{\mathbb{R}^2}f_3(x_1,x_2)g_3(x_1,x_2)dx_1dx_2\\
	&=   \Sc \left(<f, g>\right).
	\end{align*}
	Since $\b$ is    left $\mathbb{C}_{\i}$-linear, then
	\begin{eqnarray*}
		% \nonumber to remove numbering (before each equation)
		\Sc \left( \i <f, g>\right)  &=& \Sc \left( <\i  f, g>\right)\\
		&=& \Sc \left( <\b(\i  f), g>\right) \\
		&=& \Sc \left( <\i  \b f, g>\right)=\Sc \left(\i <\b f, \b g>\right) .
	\end{eqnarray*}
	At last
	\begin{equation*}
	\norm{\b f}_2^2 =\norm{f_0}_2^2+\norm{f_1}_2^2+\norm{f_2}_2^2+\norm{f_3}_2^2=\norm{f}_2^2
	\end{equation*}
	which completes the proof.
\end{proof}
\begin{theorem}[Inversion of SQFT]\label{inversiontwosided}
	If $f, \mathcal{F}_sf\in L^1(\mathbb{R}^2,\H)$ and
	\begin{equation}\label{inversiontwosidedformula}
	g(x_1,x_2)= \frac{1}{2\pi} \int_{\mathbb{R}^2} e^{\i \omega_1x_1}(\mathcal{F}_sf)(\om_1,\om_2) e^{\j \omega_2x_2}d\omega_1d\omega_2,
	\end{equation}
	then   $f(x_1,x_2) =g(x_1,x_2)$ for almost every $(x_1,x_2)\in \mathbb{R}^2$. % and $g_0(x_1,x_2)\in C_0$ .
\end{theorem}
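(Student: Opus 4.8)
The plan is to reduce the statement to the RQFT inversion theorem (Theorem~\ref{inversionrightsided}) by exploiting the two facts recorded in Proposition~\ref{beta}: that $\b$ is an involutive bijection of $\La$ onto itself, and that $\mathcal{F}_s=\mathcal{F}_r\circ\b$. The only extra ingredient needed is the analogous intertwining relation for the \emph{reconstruction} integrals, rather than for the transforms.

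Concretely, for $F\in\La$ write
\[
 (\mathcal{G}_r F)(x_1,x_2) := \frac{1}{2\pi}\int_{\mathbb{R}^2} F(\om_1,\om_2)\, e^{\j\om_2 x_2} e^{\i\om_1 x_1}\, d\om_1 d\om_2,
\]
\[
 (\mathcal{G}_s F)(x_1,x_2) := \frac{1}{2\pi}\int_{\mathbb{R}^2} e^{\i\om_1 x_1} F(\om_1,\om_2)\, e^{\j\om_2 x_2}\, d\om_1 d\om_2,
\]
so that $\mathcal{G}_r(\mathcal{F}_r f)$ is precisely the function $g$ appearing in Theorem~\ref{inversionrightsided}, and $g=\mathcal{G}_s(\mathcal{F}_s f)$ in~\eqref{inversiontwosidedformula}. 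First I would establish
\[
 \mathcal{G}_s F = \b\,(\mathcal{G}_r F)\qquad\text{for every }F\in\La .
\]
This is verified exactly as in the proof of Proposition~\ref{beta}(2): expand $F=F_0+\i F_1+\j F_2+\k F_3$ into real components; the scalar and $\i$-parts of $\mathcal{G}_rF$ already coincide with those of $\mathcal{G}_sF$ and are fixed by $\b$, while for the $\j$- and $\k$-parts one commutes the (left-hand, in $\mathcal{G}_s$) exponential $e^{\i\om_1 x_1}$ past $\j$ and $\k$, and the resulting sign changes are undone exactly by the coordinate reflections $x_1\mapsto -x_1$ built into $\b$. Equivalently, one may just push all four scalar components through the quaternion multiplication table and check that they match.

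Granting this, the theorem follows in a few lines. Given $f,\mathcal{F}_sf\in\La$, put $h:=\b f$; then $h\in\La$ by Proposition~\ref{beta}(1), and $\mathcal{F}_r h=\mathcal{F}_s f\in\La$ by Proposition~\ref{beta}(2). Since both $h$ and $\mathcal{F}_r h$ are integrable, Theorem~\ref{inversionrightsided} applies and gives
\[
 h = \mathcal{G}_r(\mathcal{F}_r h)=\mathcal{G}_r(\mathcal{F}_s f)\qquad\text{a.e. on }\mathbb{R}^2 .
\]
Applying $\b$ to both sides, using $\b^2=\mathrm{id}$ (Proposition~\ref{beta}(1)) on the left and the identity $\b\circ\mathcal{G}_r=\mathcal{G}_s$ just proved on the right, one obtains $f=\b h=\b\,(\mathcal{G}_r(\mathcal{F}_sf))=\mathcal{G}_s(\mathcal{F}_sf)=g$ a.e., which is the assertion. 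The passage from an a.e.\ identity for $h$ to one for $\b h$ is harmless, since $\b$ only relabels components and reflects coordinates, and reflections preserve null sets.

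The single nonroutine point is the identity $\mathcal{G}_s=\b\circ\mathcal{G}_r$; everything else is a direct invocation of results already in hand. That computation is elementary but must be carried out carefully because of the noncommutativity of the quaternion units — one has to keep track of which exponential sits to the left of which imaginary unit and confirm that the reflections in $\b$ compensate precisely. An alternative is to avoid naming $\mathcal{G}_r,\mathcal{G}_s$ and instead substitute $\mathcal{F}_sf=\mathcal{F}_r(\b f)$ directly into the RQFT reconstruction integral of Theorem~\ref{inversionrightsided}, then rearrange the exponential factors under the integral sign until~\eqref{inversiontwosidedformula} emerges; the bookkeeping is the same, but isolating the operator identity $\mathcal{G}_s=\b\circ\mathcal{G}_r$ keeps the logical structure cleanest.
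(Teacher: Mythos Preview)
Your proposal is correct and follows essentially the same route as the paper: reduce to the RQFT inversion theorem via $\mathcal{F}_s=\mathcal{F}_r\circ\b$, apply Theorem~\ref{inversionrightsided} to $\b f$, and then verify the companion identity $\mathcal{G}_s=\b\circ\mathcal{G}_r$ for the reconstruction integrals. The only cosmetic difference is in how that last identity is checked: the paper extracts the four real components of $h=\mathcal{G}_r(\mathcal{F}_sf)$ and $g=\mathcal{G}_s(\mathcal{F}_sf)$ using the cyclic property $\Sc(pq)=\Sc(qp)$ (computing $\Sc(h)$, $\Sc(h\i)$, $\Sc(h\j)$, $\Sc(h\k)$ and comparing with the corresponding quantities for $g$), whereas you propose a direct expansion in the spirit of the proof of Proposition~\ref{beta}(2); both arrive at $\b g=h$.
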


\begin{proof}
	By invoking assertion  (\romannumeral2) of Theorem \ref{beta}  we have $\mathcal{F}_s f=\mathcal{F}_r (\b f)$. Let $h:=\mathcal{F}_r^{-1}(\mathcal{F}_s f)$, then $(\b f)(x_1,x_2)=h(x_1,x_2)$ for almost every $(x_1,x_2)\in \mathbb{R}^2$  by Theorem \ref{inversionrightsided}. To prove $f(x_1,x_2) =g(x_1,x_2)$ for almost every $(x_1,x_2)\in \mathbb{R}^2$, it suffices to verify  that $\b g= h$. Note that  $$h(x_1,x_2)=\frac{1}{2\pi} \int_{\mathbb{R}^2}(\mathcal{F}_sf)(\om_1,\om_2) e^{\j\om_2x_2} e^{\i\om_1x_1}d\om_1d\om_2.$$
	It is easy to see that $\Sc (h(x_1,x_2))=\Sc(g(x_1,x_2))$ by invoking Eq. (\ref{cyclic multiplication symmetry}). Similarly we have $\Sc (h(x_1,x_2)\i)=\Sc(g(x_1,x_2)\i)$. Since
	\begin{equation*}
	h(x_1,x_2)\j =\frac{1}{2\pi} \int_{\mathbb{R}^2}(\mathcal{F}_sf)(\om_1,\om_2) e^{\j\om_2x_2}\j  e^{-\i\om_1x_1}d\om_1d\om_2.
	\end{equation*}
	Then
	\begin{align*}
	% \nonumber to remove numbering (before each equation)
	\Sc \left( h(x_1,x_2)\j \right) &=   \Sc \left( \frac{1}{2\pi} \int_{\mathbb{R}^2}(\mathcal{F}_sf)(\om_1,\om_2) e^{\j\om_2x_2}\j e^{-\i\om_1x_1}d\om_1d\om_2  \right) \\
	&=   \Sc \left( \frac{1}{2\pi} \int_{\mathbb{R}^2} e^{-\i\om_1x_1}(\mathcal{F}_sf)(\om_1,\om_2) e^{\j\om_2x_2}\j d\om_1d\om_2  \right) \\
	&=    \Sc \left(  g(-x_1,x_2)\j  \right).
	\end{align*}
	Analogously, we have $\Sc \left( h(x_1,x_2)\k \right) =\Sc \left(  g(-x_1,x_2)\k  \right)$. Then we conclude that $\b g= h$, which completes the proof.
\end{proof}
Therefore we can define the inverse  two-sided quaternion Fourier transform by Eq. (\ref{inversiontwosidedformula}) or equivalently  by  $\b ^{-1}\mathcal{F}_r^{-1}F_s$.
\begin{definition}[ISQFT]\label{inversionSQFT}
	For every $F_s\in\La$, the inverse two-sided quaternion Fourier transform of $F_s$ is defined by
	\begin{equation*}
	(\mathcal{F}_s^{-1}F_s)(x_1,x_2):=\frac{1}{2\pi} \int_{\mathbb{R}^2} e^{\i\om_1x_1}F_s(\om_1,\om_2) e^{\j\om_2x_2}d\om_1d\om_2.
	\end{equation*}
\end{definition}

%------------------------------------
\subsection{The Plancherel Theorem Associated with SQFT}\label{S4-2}

In Section \ref{S3-2}, we extend $\mathcal{F}_r|_{L^1\cap L^2}$ to $\Lb$. The RQFT on $\Lb$ has more symmetry than RQFT in $\La$. The relation  $\mathcal{F}_sf=\mathcal{F}_r (\b f)$ drives us to  extend $\mathcal{F}_s|_{L^1\cap L^2}$ to  $\Lb$.

\begin{definition}\label{L2SQFTdef}
	For every $f\in \Lb$, the  SQFT of $f$  is  defined  by
	\begin{equation}\label{L2SQFT}
	\Psi_s f := \Psi_r (\b f)
	\end{equation}
\end{definition}

In fact, we can define  $\Psi_s$ by taking $L^2$ norm limit of $f$ (Definition \ref{SQFT}). Eq. (\ref{L2SQFT}) gives a alternative expression   but actually equivalent form of $\Psi_s$.
%-------------------------------------------------------------------
\begin{theorem}\label{L2SQFT-prop}
	Suppose that $f, G_s\in \Lb$, Then the following assertions hold.
	\begin{enumerate}[(\romannumeral1)]
		\item The  $\Psi_s f$  defined by Eq. (\ref{L2SQFT}) is equal to the $L^2$ limit of  the sequence $\{ \mathcal{F}_sf_k\}_k $, where $\{f_k\}_k$ is any sequence in $\La\cap\Lb$ converging to $f$ in the $L^2$ norm. If, in addition, $f\in \La \cap \Lb$,  then $\Psi_s f=\mathcal{F}_sf$.
		\item  The SQFT $\Psi_s$ is a bijection on $\Lb$ and $\Psi_s^{-1} G_s =\b ^{-1}\Psi _r^{-1}G_s$. Furthermore, $\Psi_s^{-1} G_s$  is equal to the $L^2$ limit of the sequence $\{\mathcal{F}_s^{-1}G_{sk}\}_k$, where $\{G_{sk}\}_k$ is any sequence in $\La\cap\Lb$ converging to $G_s$ in the $L^2$ norm. If $G_s\in \La \cap \Lb$,  then $\Psi_s^{-1} G_s=\mathcal{F}_s^{-1}G_s $.
	\end{enumerate}
\end{theorem}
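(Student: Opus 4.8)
The plan is to read everything off the defining identity $\Psi_s=\Psi_r\circ\b$ of Definition \ref{L2SQFTdef}, using what is already established about $\Psi_r$ (the $L^2$-extension constructed in Section \ref{S3-2}, together with Theorems \ref{Fourierunitary} and \ref{L2inverseSQFT}) and about $\b$ (Proposition \ref{beta}). Two preliminary observations I would record first: $\La\cap\Lb$ is dense in $\Lb$, so sequences of the indicated kind exist; and $\b$ maps $L^p(\mathbb{R}^2,\H)$ bijectively onto itself for $p=1,2$, hence maps $\La\cap\Lb$ onto itself, and is an $L^2$-isometry.

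For (1): given any $\{f_k\}_k\subset\La\cap\Lb$ with $\|f_k-f\|_2\to0$, I would note that $\b f_k\in\La\cap\Lb$ and $\|\b f_k-\b f\|_2=\|f_k-f\|_2\to0$, so $\b f_k\to\b f$ in $\Lb$. By the construction of $\Psi_r$ in Section \ref{S3-2}, $\Psi_r(\b f)$ is the $L^2$ limit of $\{\mathcal{F}_r(\b f_k)\}_k$, and since each $f_k\in\La$ the relation (\ref{relation-of-Fs-and-Fr}) turns $\mathcal{F}_r(\b f_k)$ into $\mathcal{F}_sf_k$. Hence $\Psi_sf=\Psi_r(\b f)=\mathop{\mathrm{l.i.m.}}\limits_{k\rightarrow\infty}\mathcal{F}_sf_k$, and specializing to the constant sequence $f_k\equiv f$ when $f\in\La\cap\Lb$ gives $\Psi_sf=\mathcal{F}_sf$.

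For (2): since $\b$ is a bijection of $\Lb$ and $\Psi_r$ is unitary, hence bijective, on $\Lb$ (Theorem \ref{Fourierunitary}), the composition $\Psi_s=\Psi_r\b$ is a bijection of $\Lb$ with $\Psi_s^{-1}=\b^{-1}\Psi_r^{-1}$ (and $\b^{-1}=\b$). For the limit description I would take $\{G_{sk}\}_k\subset\La\cap\Lb$ with $\|G_{sk}-G_s\|_2\to0$; continuity of the isometry $\Psi_r^{-1}$ gives $\Psi_r^{-1}G_{sk}\to\Psi_r^{-1}G_s$, and then the $L^2$-isometry $\b^{-1}$ gives $\b^{-1}\Psi_r^{-1}G_{sk}\to\b^{-1}\Psi_r^{-1}G_s=\Psi_s^{-1}G_s$ in $\Lb$. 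Finally I would rewrite the terms: for $G_{sk}\in\La\cap\Lb$, Theorem \ref{L2inverseSQFT} identifies $\Psi_r^{-1}G_{sk}$ with the explicit integral, i.e.\ with $\mathcal{F}_r^{-1}G_{sk}$, and the identification of the ISQFT with $\b^{-1}\mathcal{F}_r^{-1}(\cdot)$ noted after Theorem \ref{inversiontwosided} yields $\b^{-1}\Psi_r^{-1}G_{sk}=\mathcal{F}_s^{-1}G_{sk}$. Thus $\Psi_s^{-1}G_s$ is the $L^2$ limit of $\{\mathcal{F}_s^{-1}G_{sk}\}_k$, and the constant sequence gives $\Psi_s^{-1}G_s=\mathcal{F}_s^{-1}G_s$ when $G_s\in\La\cap\Lb$.

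I do not expect a real obstacle: $\Psi_s$ was \emph{defined} as $\Psi_r\b$ and both factors are already completely understood. The only thing that needs attention is the bookkeeping of which $L^p$-space each object lives in --- so that relation (\ref{relation-of-Fs-and-Fr}) (valid on $\La$) and Theorem \ref{L2inverseSQFT} (valid on $\La\cap\Lb$) are applied where they are legitimate --- together with the consistency of the two $L^2$-extensions with the original integral transforms on $\La\cap\Lb$, which is already in place from Sections \ref{S3} and \ref{S4-1}.
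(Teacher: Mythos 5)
Your proof is correct and follows exactly the route the paper takes: the paper's own proof is just a two-line remark that both assertions follow from the definition $\Psi_s=\Psi_r\circ\b$ together with the construction of $\Psi_r$ and Theorem \ref{L2inverseSQFT}, and you have simply supplied the details (density of $\La\cap\Lb$, the fact that $\b$ is an isometric bijection preserving $\La\cap\Lb$, and relation (\ref{relation-of-Fs-and-Fr})) that the paper leaves implicit. No gaps.
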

\begin{proof}
	The assertion (\romannumeral1) is a    consequence of (\ref{L2SQFT}) and definition of $\Psi_r$. The assertion (\romannumeral2) is a   consequence of (\ref{L2SQFT}) and Theorem \ref{L2inverseSQFT}.
\end{proof}

As an immediate consequence of Theorem \ref{keepinnerproduct} and Theorem \ref{L2inverseSQFT}, we present the following result.
\begin{theorem}
	If $f,g\in L^2(\mathbb{R}^2,\H)$, then $<\Psi_s f,g>=<\b f, \Psi_r^{-1} g>$.
\end{theorem}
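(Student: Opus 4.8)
The plan is to reduce the claim directly to the unitarity of $\Psi_r$ together with the isometry property recorded in Theorem \ref{keepinnerproduct}. By Definition \ref{L2SQFTdef} we have $\Psi_s f = \Psi_r(\b f)$ for every $f\in\Lb$, so the left-hand side equals $<\Psi_r(\b f), g>$. The whole statement is then a short manipulation of the inner product.

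Next I would use that $\Psi_r$ is onto $\Lb$ (Theorem \ref{Fourierunitary}), so that $\Psi_r^{-1}g$ is a well-defined element of $\Lb$ and $g = \Psi_r(\Psi_r^{-1}g)$. Substituting this into the inner product gives $<\Psi_r(\b f), \Psi_r(\Psi_r^{-1}g)>$, and applying Theorem \ref{keepinnerproduct} to the pair $(\b f, \Psi_r^{-1}g)$ collapses this to $<\b f, \Psi_r^{-1}g>$, which is exactly the right-hand side. Equivalently, one can phrase the argument through the adjoint: since $\Psi_r$ is unitary, Theorem \ref{L2inverseSQFT} gives $\Psi_r^{-1}=\Psi_r^{*}$, so $<\Psi_r(\b f), g> = <\b f, \Psi_r^{*}g> = <\b f, \Psi_r^{-1}g>$ straight from the defining relation of the adjoint.

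Either route is essentially a one-line computation, and there are no convergence or measure-theoretic obstacles, since everything takes place for genuine $L^2$ functions and $\b$ is a norm-preserving bijection on $\Lb$ by Proposition \ref{beta}. The only point that genuinely relies on the earlier theory rather than on formal manipulation is the surjectivity of $\Psi_r$ (so that $\Psi_r^{-1}g$ is meaningful for an arbitrary $g\in\Lb$); this is the content of Theorem \ref{Fourierunitary}, and it is the step I would flag as doing the real work behind the proof.
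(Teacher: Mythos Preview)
Your proposal is correct and matches the paper's approach exactly: the paper states this theorem without proof as ``an immediate consequence of Theorem~\ref{keepinnerproduct} and Theorem~\ref{L2inverseSQFT}'', which is precisely your argument (insert $g=\Psi_r(\Psi_r^{-1}g)$ and apply inner-product preservation, or equivalently use $\Psi_r^{-1}=\Psi_r^{*}$). Your observation that the surjectivity of $\Psi_r$ is the substantive input is apt, and that content is already packaged into Theorem~\ref{L2inverseSQFT}.
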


Having defined the SQFT for functions in $\Lb$, we obtain the following Parseval's identity.
\begin{theorem}\label{Parseval Identity SQFT}
	Suppose that $f,g\in L^2(\mathbb{R}^2,\H)$, $F_s=\Psi_s f, G_s=\Psi_s g$. Let
	\begin{equation*}
	p_0+\i p_1+\j p_2+\k p_3=\int_{\mathbb{R}^2}f(x_1,x_2)\overline{g(x_1,x_2)}dx_1dx_2
	\end{equation*}
	and
	\begin{equation*}
	q_0+\i q_1+\j q_2+\k q_3=\int_{\mathbb{R}^2}F_s(\om_1,\om_2)\overline{G_s(\om_1,\om_2)}d\om_1d\om_2.
	\end{equation*}
	%\begin{equation}\label{persevereinnerproduct}
	%\int_{\mathbb{R}^2}f(x_1,x_2)\overline{g(x_1,x_2)}dx_1dx_2=\int_{\mathbb{R}^2}F_s(\om_1,\om_2)\overline{G_s(\om_1,\om_2)}d\om_1d\om_2.
	%\end{equation}
	Then $\norm{F_s}_2=\norm{f}_2$ and  $p_m=q_m,~(m=0,1)$. Moreover, if both $f$ and $g$ are $\mathbb{C}_{\i}$-valued or even with respect to the first variable, then $p_m=q_m,~(m=0,1,2,3)$.
\end{theorem}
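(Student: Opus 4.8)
The plan is to reduce everything about $\Psi_s$ to the already-established facts about $\Psi_r$ via the identity $\Psi_s f = \Psi_r(\b f)$ from Definition \ref{L2SQFTdef}, together with the isometry/partial-isometry properties of $\b$ recorded in Proposition \ref{beta}(\romannumeral3). First I would observe that $F_s = \Psi_s f = \Psi_r(\b f)$ and $G_s = \Psi_s g = \Psi_r(\b g)$, so by Theorem \ref{Parseval} (extended to $\Lb$) applied to $\b f$ we get $\norm{F_s}_2 = \norm{\Psi_r(\b f)}_2 = \norm{\b f}_2 = \norm{f}_2$, the last equality being the ``in particular'' clause of Proposition \ref{beta}(\romannumeral3). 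That disposes of the norm identity immediately.

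Next, for the coefficient identities, I would apply Theorem \ref{keepinnerproduct} (inner-product preservation for $\Psi_r$) to the pair $\b f, \b g$: this gives
\begin{equation*}
  <\b f, \b g> = <\Psi_r(\b f), \Psi_r(\b g)> = <F_s, G_s>.
\end{equation*}
Now $p_0 = \Sc(<f,g>)$ and $p_1 = -\Sc(\i <f,g>)$ (reading off scalar and $\i$-components, using $\Sc(\i \cdot \i q) = \Sc(-q)$), and similarly $q_0 = \Sc(<F_s,G_s>)$, $q_1 = -\Sc(\i <F_s,G_s>)$. By Proposition \ref{beta}(\romannumeral3) we have $\Sc(<\b f, \b g>) = \Sc(<f,g>)$ and $\Sc(\i <\b f, \b g>) = \Sc(\i <f,g>)$, hence $\Sc(<F_s,G_s>) = \Sc(<f,g>)$ and $\Sc(\i<F_s,G_s>) = \Sc(\i<f,g>)$, which is exactly $p_0 = q_0$ and $p_1 = q_1$.

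For the final ``moreover'' clause, if $f$ and $g$ are both $\mathbb{C}_{\i}$-valued or both even in the first variable, then $\b f = f$ and $\b g = g$ by the definition of $\b$, so the identity from Theorem \ref{keepinnerproduct} reads $<f,g> = <F_s,G_s>$ as full quaternion equations, giving $p_m = q_m$ for all $m = 0,1,2,3$. The only mildly delicate point — and the one I would be most careful about — is the bookkeeping that translates ``$\Sc$ and $\i$-part of a quaternionic inner product'' into the real scalars $p_0,p_1,q_0,q_1$ correctly; this is the same device already used in the proof of Theorem \ref{keepinnerproduct}, so it is routine but must be stated cleanly. There is no real obstacle here: the theorem is essentially a transport of Theorems \ref{Parseval} and \ref{keepinnerproduct} through the map $\b$, and the reason one cannot get $p_2=q_2$, $p_3=q_3$ in general is precisely that Proposition \ref{beta}(\romannumeral3) only controls the $\Sc$ and $\i$-parts of $<\b f,\b g>$, not the $\j$- and $\k$-parts.
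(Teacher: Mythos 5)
Your proof is correct and follows essentially the same route as the paper: both reduce the SQFT statement to the RQFT results through the identity $\Psi_s f=\Psi_r(\b f)$ together with Proposition \ref{beta}(3), and both handle the ``moreover'' clause by noting $\b f=f$, $\b g=g$ so that Theorem \ref{keepinnerproduct} applies verbatim. The only cosmetic difference is in the middle step: you invoke Theorem \ref{keepinnerproduct} directly to get $<F_s,G_s>=<\b f,\b g>$ and then read off the $\Sc$- and $\i$-parts via Proposition \ref{beta}(3), whereas the paper first establishes $\norm{F_s}_2=\norm{f}_2$ and then polarizes it on $\|f+g\|_2^2$ and $\|f+\i g\|_2^2$; the two arguments rest on exactly the same facts.
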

\begin{proof}
	Firstly,  we show that Parseval's identity of SQFT holds. Applying Parseval's identity of RQFT and (\romannumeral3) of Theorem \ref{beta}, we have
	\begin{align*}
	% \nonumber to remove numbering (before each equation)
	\norm{F_s}_2^2 &=  <\Psi_s f, \Psi_s f> \\
	&=  <\Psi_r (\b f), \Psi_r (\b f)> \\
	&= <\b f,\b f> \\
	&=  \norm{\b f}_2^2=\norm{f}^2_2.
	\end{align*}
	By invoking Parseval's identity  of SQFT to $\|f+ g\|_2^2=\|F_s+  G_s\|_2^2$, $\|f+\i g\|_2^2=\|F_s+\i G_s\|_2^2$ respectively, we  get $p_m=q_m,~(m=0,1)$.
	If  both $f$ and $g$ are $\mathbb{C}_{\i}$-valued or even with respect to the first variable, we have $\Psi_s f = \Psi_r f $ and $\Psi_s g = \Psi_r g $, therefore $<\Psi_s f, \Psi_s g >=<\Psi_r f, \Psi_r g >=<f, g>$, that is $p_m=q_m,~(m=0,1,2,3)$.
\end{proof}
\begin{remark}
	By applying Eq. (\ref{cyclic multiplication symmetry}),  Hitzer \cite{hitzer2007quaternion} proved  $ p_0=q_0$, it follows that  $\norm{F_s}_2=\norm{f}_2$.  We have shown that $p_1$ is also equal to $q_1$. One may be wondering what is the relationship between  $p_2$ and $q_2$ ($p_3$ and $q_3$)? At the moment, we do not know their relation. Since $\Psi_s$ is not left $\H$-linear, so $\|f+\j g\|_2^2\neq\|F_s+\j G_s\|_2^2$. In fact,  $\|f+g \j \|_2^2=\|F_s+ G_s \j\|_2^2$. It follows that $\Sc \left(<f,g\j>\right)=\Sc \left(<F_s,G_s \j>\right)$ rather than  $\Sc \left(<f, \j g>\right)=\Sc \left(<F_s,\j G_s >\right)$. However, $\Sc \left(<f, \j g>\right)=\Sc \left(<F_s,\j G_s >\right)$ is equivalent to $p_2=q_2$ by applying Eq. (\ref{prop-of-inner-product}).
\end{remark}

\section{Discussions and Conclusions}\label{S6}

Due to the non-commutativity  of multiplication of quaternions, there are different types of QFTs and we only consider two typical types of them. How about the rest of QFTs?

\begin{itemize}
	\item [1.]   The left-sided QFT (LQFT) $ e^{-\i\om_1x_1} e^{-\j\om_2x_2}f(\cdot)$ follows a similar pattern to RQFT, with kernel moved to left hand side. As left-sided QFT is right $\H$-linear, we only need to revise the definition of inner product in $\Lb$ to be  \begin{equation*}
	<f,g>_{L^2(\mathbb{R}^2,\H)}=\int_{\mathbb{R}^2}\overline{f(x_1,x_2)}g(x_1,x_2)dx_1dx_2.
	\end{equation*}
	Then  the results  of  RQFT still hold for LQFT case.
	\item [2.] If $\i, \j$ are substituted into $\u_1, \u_2$ respectively, where $\u_1, \u_2$ be any two orthogonal  unit pure imaginary quaternion, all of above results still hold. A    novel split called orthogonal 2D planes split (OPS) was introduced by Hitzer and Sangwine \cite{hitzer2013orthogonal}. Employing OPS, they further analyzed  a kind of new QFT form with respect to two arbitrary pure unit quaternions.   In \cite{sangwine2012complex}, the author proved the inversion theorem of two-sided discrete QFT, whereas  the imaginary units $\u_1, \u_2$  of their transform do not need to be orthogonal. In the discrete case, both of transform and inverse transform are  presented by finite summations. The interchange for the order of two finite summations  is permissible   and  finite sequences are always summable. So the situation of current paper is different from that in \cite{sangwine2012complex}. In the continuous case,  the proposed method of current paper is invalid for the  general imaginary units. We may consider this problem as a part of  our future work.
	\item [3.]   The types of $ e^{-\u\om_1x_1} e^{-\u\om_2x_2}f(\cdot), f(\cdot)  e^{-\u\om_1x_1} e^{-\u\om_2x_2}$ and $ e^{-\u\om_1x_1}f(\cdot) e^{-\u\om_2x_2}$ (single axis types)  obviously easier than the types in present paper (factored types). The single axis types of QFTs have similar properties to factor types of QFTs. Moreover,  the proofs will be simpler.
	\item [4.]  The inversion theorem and Plancherel theorem of transforms $ e^{-\u_1\om_1x_1-\u_2\om_2x_2}f(\cdot)$ and $ f(\cdot) e^{-\u_1\om_1x_1-\u_2\om_2x_2}$ (dual axis types) have not been worked out yet in this paper.
\end{itemize}

The linear canonical transform  (LCT), as a generalization of the classical Fourier transform, has more
degrees of freedom  than the FT and the FRFT, but with
similar computation cost as the conventional FT \cite{koc2008digital}. In \cite{kou2016uncertainty},  the authors generalized the classical LCT to the quaternionic
algebra and defined quaternion linear canonical transforms (QLCTs). The generalized analytic signal in 2{D} {QLCTs} domains was also applied to envelope detection in \cite{kou2016envelope}. Each type of QLCT   corresponds to a specific type of QFT. Based on the existing properties of QFTs, the properties of QLCTs can be established by building relationship between QLCTs and  QFTs. More detail about the properties of QLCTs for square integrable functions can be found in \cite{CK2016}.
%Each type of QLCT   corresponds to a specific type of QFT . Therefore the above results of QFT  are suitable for QLCT  as well.

Notable differences with the standard FT gives the QFT   an important role in scientific and engineering applications. Meanwhile  several novel tools (see e.g. Theorem 3.2, 3.12, 4.3) need to be proposed to prove the properties of QFT for square integrable functions. All of the newly obtained results (Theorem 3.2, Lemma 3.4 and 3.5, Theorem 3.7, 3.11, 3.14, 3.15, 3.16, 3.17, 4.8, 4.9) are original and different from the previous studies in  \cite{ell1992hypercomplex,ell1993quaternion, sangwine2012complex, bulow1999hypercomplex, hitzer2007quaternion} and they have enriched the  content of quaternion Fourier analysis.

Some previous results (see e.g. Theorem 4.1 and 4.2 in \cite{ell1992hypercomplex}) were established by simply using    results of the standard FT for complex-valued functions. However, it should be noted  that the partial QFT can not be simply viewed as the standard FT. The results of standard FT could be directly applied only if  both of the  function  and  the  kernel are complex-valued (i.e. belong to $\mathbb{C}_\u$). If only the kernel belongs to complex, the transform can only be viewed  as a single axis type QFT.  The single axis QFTs are different from the standard FT. For example, the standard convolution theorem and multiplication formula are invalid for the single axis QFTs. But these results are important to study the QFTs. Moreover, for Theorem 4.1  in \cite{ell1992hypercomplex}, the author didn't make any assumption about  the transformed function $H(\j \omega,\k v)$.  In fact, the assumption $ H\in L^1(\mathbb{R}^2,\mathbb{H}) $  is  essential.

It has been shown by several authors that the QFT can be split into the standard FTs (not directly view a QFT as two partial QFTs). We had thought to derive Plancherel theorem from existing results of the standard case. But there are two reasons why  we did not use this approach.

\begin{itemize}
	\item [1.]      When we consider inversion theorem on $ L^1(\mathbb{R}^2,\mathbb{H})$ and Plancherel theorem on $ L^2(\mathbb{R}^2,\mathbb{H})$, We need to know not only how to split the QFT $\mathcal{F}$ into complex FTs ($\mathfrak{F}_1,\mathfrak{F}_2, \cdots$), but also how to transform  $\mathcal{F}f$ back to  original $f$ by using  corresponding inverse transforms $\mathfrak{F}_1^{-1},\mathfrak{F}_2^{-1}, \cdots$. This goal is hard to achieve due to the non-commutativity.
	\item [2.]   Write $f=f_0+\i f_1+\j f_2+ \k f_3$ and
	$$e^{-\i 2\pi \omega_1 x_1} e^{-\j 2\pi \omega_2 x_2}=e^{-\i 2\pi (\omega_1 x_1+\omega_2x_2)}  \frac{1-\k}{2}+e^{-\i 2\pi  (\omega_1 x_1-\omega_2x_2)}  \frac{1+\k}{2}.$$
	Then   the right-sided QFT can be spilt into the standard FTs as follows.
	\begin{equation}\label{Split}
	\begin{split}
	&~~~~F_r(\omega_1,\omega_2)\\
	&=\int_{\mathbb{R}^2}f(x_1,x_2)e^{-\i 2\pi \omega_1 x_1} e^{-\j 2\pi \omega_2 x_2} dx_1dx_2\\
	&  =F_1(\omega_1,\omega_2) \frac{1-\k}{2}+\overline{F_2(\omega_1,\omega_2)} \frac{\j-\i}{2}+F_1(\omega_1,-\omega_2) \frac{1+\k}{2}+\overline{F_2(\omega_1,-\omega_2)} \frac{\j+\i}{2}
	\end{split}
	\end{equation}
	where $F_1$ and $F_2$ are standard FTs (both  of the   function  and the  kernel are complex-valued) and they are respectively  defined by
	\begin{equation*}
	F_1(\omega_1,\omega_2)=\int_{\mathbb{R}^2}(f_0(x_1,x_2)+\i f_1(x_1,x_2))e^{-\i 2\pi (\omega_1 x_1+\omega_2x_2)} dx_1dx_2
	\end{equation*}
	and
	\begin{equation*}
	F_2(\omega_1,\omega_2)=\int_{\mathbb{R}^2}(f_2(x_1,x_2)-\i f_3(x_1,x_2))e^{-\i 2\pi (\omega_1 x_1+\omega_2x_2)} dx_1dx_2.
	\end{equation*}
	By  Parseval's identity of the standard FT, $\norm{f}_2^2=\norm{f_0+\i f_1}_2^2+\norm{f_2-\i f_3}_2^2=\norm{F_1}_2^2+\norm{F_2}_2^2$. However, we can not conclude from Eq. (\ref{Split}) that
	$\norm{F_r}_2^2=\norm{F_1}_2^2+\norm{F_2}_2^2$.
\end{itemize}

In summary, we investigate  the behaviors of the QFTs (except for  dual axis types) on the space $\La$. They are reversible under the suitable condition  (the transformed function still integrable). The QFTs on $\Lb$ have symmetric property, for example, RQFT is unitary on  $\Lb$. We exploit the relationship between various types of QFTs. These relations are significant in the understanding of quaternion Fourier analysis.

\section{Acknowledgements}

The authors acknowledge financial support from the National Natural Science Funds 11401606, University of Macau MYRG2015-00058-L2-FST and the Macao Science and Technology Development Fund (FDCT/099/2012/A3 and FDCT/031/2016/A1).

%-----------------------------------------

\end{document}